\begin{document}

\theoremstyle{plain} \numberwithin{equation}{section}
\newtheorem*{unlabeledtheorem}{Theorem}
\newtheorem{theorem}{Theorem}[section]
\newtheorem{corollary}[theorem]{Corollary}
\newtheorem{conjecture}{Conjecture}
\newtheorem{lemma}[theorem]{Lemma}
\newtheorem{proposition}[theorem]{Proposition}
\newtheorem{convention}[theorem]{}
\newtheorem*{unlabeledproposition}{Proposition}
\newtheorem{theoremalpha}{Theorem}
\renewcommand{\thetheoremalpha}{\Alph{theoremalpha}}
\newtheorem{propositionalpha}[theoremalpha]{Proposition}
\newtheorem{corollaryalpha}[theoremalpha]{Corollary}

\newtheorem{fact}{Fact}

\theoremstyle{definition}
\newtheorem{definition}[theorem]{Definition}
\newtheorem{finalremark}[theorem]{Final Remark}
\newtheorem{remark}[theorem]{Remark}
\newtheorem*{unlabeledremark}{Remark}
\newtheorem{example}[theorem]{Example}
\newtheorem{question}{Question} 
\newtheorem*{warning}{Warning}

\title[RST for integral curves]{A Riemann singularity theorem for integral curves}
\author[Casalaina-Martin]{Sebastian Casalaina-Martin}
\address{University of Colorado at Boulder, Department of Mathematics,  Campus Box 395,
Boulder, CO 80309-0395, USA}
\email{casa@math.colorado.edu}

\author[Kass]{Jesse Leo Kass}
\address{University of Michigan, Department of Mathematics,  530 Church Street, Ann Arbor, MI 48109}
\email{jkass@umich.edu}
\thanks{The second author was partially supported by NSF grant DMS-0502170}

\date{\today}

\begin{abstract}
We prove results generalizing the classical Riemann Singularity Theorem to the case of integral, singular curves.  The main result is a computation of the multiplicity of the theta divisor of an integral, nodal curve at an arbitrary point.  We also suggest a general formula for the multiplicity of the theta divisor of a singular, integral curve at a point and present some evidence that this formula should hold.   Our results give a partial answer to a question posed by Lucia Caporaso in a recent paper.
\end{abstract}

\maketitle
\bibliographystyle{amsplain}

\section*{Introduction}
In this article, we study the local geometry of the theta divisor of an integral curve with the goal of extending the Riemann Singularity Theorem.   Motivated in part by work of Caporaso \cite{caporaso07}, we compute the multiplicity of the theta divisor of an integral, nodal curve at a point and then use this to determine the singular locus of the theta divisor.  We also suggest a formula that would extend our results to more general curves and present some evidence that this formula should hold. 

	To motivate our results, recall that associated to a non-singular curve $X/k$ of genus $g\ge2$ is its Jacobian variety $J_{X/k}^{g-1}$ parametrizing degree $g-1$ line bundles on $X$.  The locus   $\Theta$ corresponding to line bundles that admit a non-zero global section is an ample divisor known as the theta divisor.  The Riemann Singularity Theorem  states that, if $x$ is a point of $\Theta$ that corresponds to a line bundle $L$, then $\operatorname{mult}_{x} \Theta  = h^{0}(X,L)$.
	
	For a singular curve $X/k$, the moduli space of degree $g-1$  line bundles is typically non-complete, and a   completion of this space is given by the moduli space of rank $1$, torsion-free sheaves, denoted $\bar J^{g-1}_{X/k}$.  There is a natural analogue $\Theta$ of the classical theta divisor that lies on $\bar{J}_{X/k}^{g-1}$.  The main result of this paper is the following generalization of the Riemann Singularity Theorem:

\begin{theoremalpha}\label{teoC}
		Suppose that $X/k$ is an integral curve with at worst planar singularities.  Let $x$ be a point of the theta divisor $\Theta$ corresponding to a rank $1$, torsion-free sheaf $I$.  If the sheaf $I$ fails to be locally free at $n$ nodes and no 	
		other points, then the multiplicity and order of vanishing of $\Theta$ at $x$ satisfy the equation
\begin{displaymath}
	\operatorname{mult}_{x}\Theta = \left(\operatorname{mult}_x \bar{J}_{X/k}^{g-1}\right)\cdot \operatorname{ord}_x\Theta =2^{n}  \cdot h^{0}(X,I).
\end{displaymath}
\end{theoremalpha}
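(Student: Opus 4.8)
The plan is to reduce everything to a computation in the complete local ring $R=\widehat{\mathcal O}_{\bar{J}^{g-1}_{X/k},x}$ and to treat the two asserted equalities separately. For the first equality I would invoke the standard behaviour of Hilbert--Samuel multiplicity under cutting by one equation: if $\Theta$ is defined near $x$ by a single function $\theta\in R$ with $\theta\in\mathfrak m^{d}\setminus\mathfrak m^{d+1}$, and if the leading form $\theta^{\ast}$ is a nonzerodivisor in the associated graded ring $\operatorname{gr}_{\mathfrak m}R$, then $\operatorname{gr}_{\mathfrak m}(R/\theta R)\cong(\operatorname{gr}_{\mathfrak m}R)/(\theta^{\ast})$, and comparing Hilbert series gives $e(R/\theta R)=d\cdot e(R)$. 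As $\operatorname{mult}_x\Theta=e(R/\theta R)$, $\operatorname{mult}_x\bar{J}^{g-1}_{X/k}=e(R)$ and $\operatorname{ord}_x\Theta=d$, this is exactly the first equality. The theorem then reduces to three claims: $e(R)=2^{n}$; $d=\operatorname{ord}_x\Theta=h^{0}(X,I)$; and $\theta^{\ast}$ is a nonzerodivisor in $\operatorname{gr}_{\mathfrak m}R$.

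I would obtain $e(R)=2^{n}$ from the local structure of the compactified Jacobian. Because $I$ fails to be locally free exactly at the nodes $p_{1},\dots,p_{n}$, deformation theory of $I$ localizes there: with $A_i=\widehat{\mathcal O}_{X,p_i}\cong k[[u,v]]/(uv)$ the stalk $I_{p_i}$ is the maximal ideal $\mathfrak m_i\cong A_i/(u)\oplus A_i/(v)$, and the two-periodic matrix-factorization resolution of $\mathfrak m_i$ yields $\operatorname{Ext}^{1}_{A_i}(\mathfrak m_i,\mathfrak m_i)\cong k^{2}$ with versal deformation cut out by the single equation $uv=0$. Hence, up to a smooth factor, $R$ is the completed tensor product of $n$ node singularities,
\[
R\cong k[[x_1,y_1,\dots,x_n,y_n,z_1,\dots,z_m]]/(x_1y_1,\dots,x_ny_n),\qquad m=g-n.
\]
Since Hilbert--Samuel multiplicity is multiplicative under completed tensor product, each factor $k[[x_i,y_i]]/(x_iy_i)$ contributes $2$ and the smooth factor contributes $1$, so $e(R)=2^{n}$.

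Next I would study $\Theta$ through its determinantal description. Representing the derived pushforward of a universal sheaf by a two-term complex $E^{0}\xrightarrow{\phi}E^{1}$ of vector bundles of equal rank $N$, one has $\Theta=V(\det\phi)$ locally, so $\theta=\det\phi$. By Serre duality the corank of $\phi(x)$ is $h^{0}(X,I)=h^{1}(X,I)=:r$; performing elementary operations over $R$ reduces $\phi$ to an $r\times r$ matrix $\psi$ with entries in $\mathfrak m$, whence $\theta=(\text{unit})\cdot\det\psi\in\mathfrak m^{r}$ and $\operatorname{ord}_x\Theta\ge r$. To pin the order down, I would compute the linear parts of the $\psi_{ij}$ in $\mathfrak m/\mathfrak m^{2}$: they are the values of the cup-product (Kempf--Petri) map coupling $H^{0}(X,I)$ with the deformations in $\operatorname{Ext}^{1}(I,I)$, hence explicit linear forms in the $x_i,y_i,z_j$. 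A Riemann--Kempf nondegeneracy for this pairing would identify $\theta^{\ast}$ with the determinant of an $r\times r$ matrix of linear forms, of degree exactly $r$, giving $\operatorname{ord}_x\Theta=r=h^{0}(X,I)$ and, with $e(R)=2^{n}$, the second equality $2^{n}\cdot h^{0}(X,I)$.

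The hard part will be the nonzerodivisor claim for $\theta^{\ast}$. Since $\bar{J}^{g-1}_{X/k}$ is singular at $x$, its tangent cone $\operatorname{Spec}(\operatorname{gr}_{\mathfrak m}R)=\{x_1y_1=\dots=x_ny_n=0\}$ is reducible, so $\operatorname{gr}_{\mathfrak m}R$ is far from a domain and ``nonzerodivisor'' is a genuine condition, not a consequence of $\theta^{\ast}\neq0$. Concretely one must show that the leading form of the theta function restricts to something of order exactly $r$ on each of the $2^{n}$ coordinate components of the tangent cone at once, i.e.\ that the local structure at every node and the global quantity $h^{0}(X,I)$ are compatible simultaneously. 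I would expect to control this either by the explicit Kempf--Petri analysis above, or inductively by resolving one node at a time and comparing $\Theta$ on $\bar{J}^{g-1}_{X/k}$ with the theta divisor on the compactified Jacobian of the partial normalization, so that each resolved node contributes one factor of $2$.
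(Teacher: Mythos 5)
Your reduction is sound as far as it goes: if the leading form $\theta^{\ast}$ is a nonzerodivisor in $\operatorname{gr}_{\mathfrak m}R$ then indeed $\operatorname{mult}_x\Theta=\operatorname{ord}_x\Theta\cdot\operatorname{mult}_x\bar J^{g-1}_{X/k}$, and your computation of the local model of $\bar J^{g-1}_{X/k}$ (hence $e(R)=2^n$) agrees with Proposition \ref{locstrthm} and Corollary \ref{CorMultJac}. But the two remaining claims --- that $\operatorname{ord}_x\Theta$ equals $h^{0}(X,I)$ exactly (equivalently, that the determinant of the matrix of linear parts of $\psi$ is nonzero of degree $r$), and that $\theta^{\ast}$ is a nonzerodivisor in the graded ring $k[x_i,y_i,z_j]/(x_1y_1,\dots,x_ny_n)$ --- are not technical points to be ``controlled'' later; together they \emph{are} the theorem, and your proposal proves neither. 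There is no Riemann--Kempf nondegeneracy statement available to cite here: Kempf's cup-product analysis applies at points corresponding to line bundles, where the tangent space is $H^{1}(X,\mathscr O_X)$ and the ambient space is smooth; at a point where $I$ fails to be locally free the pairing lives on $\operatorname{Ext}^{1}(I,I)$, the tangent cone has $2^{n}$ irreducible components, and since this graded ring is reduced, the nonzerodivisor condition means precisely that the leading form does not vanish identically on any single one of those $2^{n}$ components --- a new statement requiring a new argument. The paper's own examples show how delicate this is: the example in \S\ref{MultiplicitySection} (the divisor $y-x^2$ on $xy=0$) has zero-divisor leading form and strict inequality in \eqref{eqnmat}, and Example \ref{exacusp} (the cusp) shows that the analogous conclusions can fail for non-nodal singularities, so any proof must exploit the nodal structure in an essential way; acknowledging this, as you do, is not the same as doing it.

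For comparison, the paper proves exactly these two points by a route that avoids leading forms altogether. It constructs an explicit locally trivial one-parameter family of sheaves deforming $I$ (Lemma \ref{ArcExistLemma}, following \cite{yano}), giving an arc whose contact order with $\Theta$ is exactly $h^{1}(X,I)$, while every arc has contact order at least $h^{1}(X,I)$ (Propositions \ref{ThetaFuncProp} and \ref{teotestarc}). It then proves a commutative-algebra lemma for the standard nodal local model (Lemma \ref{lemluck}): passing to the normalization, $\operatorname{mult}_x\Theta$ is the sum of the orders of the pullback of $\Theta$ at the $2^{n}$ preimage points; each such order is bounded below by $\operatorname{ord}_x\Theta$, and bounded above by the contact order of the arc \emph{because local triviality forces the arc to factor through the intersection of all the branches}, so it lifts to every sheet of the normalization. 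This is the step that plays the role of your nonzerodivisor claim, and it is carried by the geometric construction of the family, not by linear algebra on $\operatorname{gr}_{\mathfrak m}R$. Your alternative suggestion (peeling off one node at a time via partial normalizations) is closer in spirit to this, but as written your proposal identifies the central difficulty without resolving it.
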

Recall that $\operatorname{ord}_{x}\Theta$ is defined to be the largest power of the maximal ideal of $x$ that contains a local equation for $\Theta$.  It is known (Corollary \ref{CorMultJac})  that $\operatorname{mult}_x\bar J^{g-1}_{X/k}=2^n$, and so part of the statement is that $\operatorname{ord}_x\Theta=h^0(X,I)$.  A geometric interpretation of the number $h^{0}(X,I)$ in terms of a linear system is described in \S \ref{AbelSection}.  The case of the theorem where $I$ is a line bundle is due to Kempf \cite{kempf70} using different methods.

When $X$ is nodal, this result allows us to immediately determine the singular locus of $\Theta$.  Let us write $\partial \Theta$ for the locus in $\Theta$ that corresponds to sheaves that fail to be locally free and $W^1_{g-1}$ for the locus of sheaves $I$ satisfying $h^{0}(X,I) \ge 2$.  An immediate consequence of the previous theorem is that $\Theta_{\text{sing}} = \partial \Theta \cup W^{1}_{g-1}$. This statement can be proven in greater generality, and this is done in \S\ref{SectionPfA}.

Our proof of Theorem \ref{teoC} is a generalization of the proof of the classical Riemann Singularity Theorem given by Friedman and the first author in \cite{yano}.  The multiplicity is computed by taking test arcs on the compactified Jacobian and studying the intersection of the arc with the theta divisor.  By general formalism, constructing appropriate arcs is equivalent to constructing certain 1-parameter families of rank 1, torsion-free sheaves, and such families can be constructed directly (\S \ref{ThetaSection}).  There are some obstacles to applying the methods of \cite{yano} that do not appear when working with non-singular curves or, more precisely, when working within the moduli space of line bundles on a possibly singular curve, and part of this paper is devoted to extending the tools from that paper.

In light of Theorem \ref{teoC}, it is natural to ask what one can say about the multiplicity of the theta divisor of an integral curve with arbitrary  planar singularities.  To be precise, if $X/k$ is an integral curve with at worst planar singularities and $x$ is a point of the theta divisor that corresponds to a sheaf $I$, then it would be interesting to know if the equalities
\begin{equation}\label{ConjA}
	\operatorname{mult}_{x}\Theta = \left( 
	\operatorname{mult}_{x}\bar{J}^{g-1}_{X/k} \right) \cdot \operatorname{ord}_{x} \Theta \ \ 
	\textnormal{ and }\ \ \operatorname{ord}_x\Theta=h^0 (X,I)
\end{equation} 
still held.  Note that $\ge$ always holds in the first formula (e.g. \eqref{eqnmat})  and $\le$ in the second (Proposition \ref{ThetaFuncProp}, \eqref{eqnbasicarc}).
While we can not answer this question here, we are able to prove some further results suggesting this may be the case.
 We refer the reader to \S\ref{SectionPfA} for the statements and proofs.
 
 The literature on the classical Riemann Singularity Theorem is vast, and a guide to it can be found in the book \cite{ACGH}.  Particularly relevant to this article are the papers of Kempf   \cite{kempf70,kempf73} and Beauville  \cite{beauville77}.    The focus of study in these papers is on the local structure of $\Theta$ at a point corresponding to a line bundle, although integral curves with arbitrary singularities, as well as reducible curves, are also considered.

There are some papers that treat the case of a point that does not correspond to a line bundle.  Results about the local geometry of $\Theta$ at such a point were proven by Bhosle and Parameswaran  and can be found in  \cite[Theorem 5.4]{bp}.  Their goals are somewhat different from ours, and their results are similar to Proposition \ref{teoA} of this paper.  Samuel Grushevsky has suggested to the authors it may be possible to derive certain cases of Theorem \ref{teoC} using results in  \cite{mk}.

The results of this paper suggest a few avenues for further research.  In one direction, it would be desirable to further study the local geometry of the theta divisor of an integral, nodal curve.  In this paper we compute the multiplicity $\operatorname{mult}_{x}\Theta$; i.e., the degree of the tangent cone $\mathscr{T}_{x}\Theta$, and  one could ask for a more precise description of this cone.  It would also be interesting to relate geometric properties of Brill-Noether loci to geometric properties of the curve, as there is a body of such results  (e.g. Martens' Theorem) for non-singular curves.  The two authors will address  these topics in a subsequent paper.

In a different direction, one could ask for an extension of Theorem \ref{teoC} to more general curves.  We have already posed such a question for integral curves with planar singularities, but one can also ask the question for reducible, nodal curves;  Caporaso's original question concerned the theta divisor of a stable curve.  There are some complications that arise in trying to extend the results of this paper to reducible curves, and this question is currently being investigated by  Filippo Viviani and the two authors.

\subsection*{Acknowledgements}
The authors are very grateful to Steven Kleiman  for many useful and encouraging conversations about integral curves, compactified Jacobians, and their theta divisors.  His comments on an early draft of this document were extremely helpful.  In particular, he suggested that \S \ref{MultiplicitySection} be written in the language of commutative algebra; this not only allowed the authors to improve the exposition, but also to discover an error (which has since been fixed).   We would also like to take the opportunity to thank   Lucia Caporaso and Eduardo Esteves for many productive email exchanges concerning the literature on compactified Jacobians.   Robert Lazarsfeld and the two referees provided useful feedback that greatly improved the exposition.  Work on this paper was done while the second author was a student of Joseph Harris.  Both authors would like to thank him for sharing his knowledge of compactified Jacobians and other topics.  The first author would also like to thank Harvard University and the MSRI for their hospitality during the preparation of this paper.

\section{Conventions}  

\begin{convention}
	We will work over a fixed algebraically closed field $k$.  All \textbf{schemes} are $k$-schemes and all \textbf{morphisms} are implicitly assumed to respect the $k$-structure. 
\end{convention}

\begin{convention}
	The term \textbf{point} with no adjectives attached means a $k$-valued point.  For the schemes with which we will be making point-wise arguments, the set of $k$-valued points is always in canonical bijection with the set of closed points. 
\end{convention}

\begin{convention}
	A \textbf{variety} is an integral $k$-scheme that is separated and of finite type over $k$.  We say that a variety is \textbf{non-singular} if it is smooth over $k$.
\end{convention}

\begin{convention}
	A \textbf{curve} is a proper, connected $k$-scheme of pure dimension $1$.
	A \textbf{node} on a curve $X$ is  a  point $p$ on $X$ such that the completion of the local ring  at $p$ is isomorphic to $k[[x,y]]/(x y)$. We say that a singularity of a curve is \textbf{planar} if the Zariski tangent space at that point is 2-dimensional.
\end{convention}

\begin{convention}
	We follow the convention that a \textbf{projective space} $\mathbb{P}V$ parametrizes hyperplanes in $V$. 
\end{convention}

\section{Compactified Jacobians of Integral Curves}  \label{Sec: ComJac}

We begin by reviewing the basic theory  of compactified Jacobians.  We give a precise definition and then recall some basic results concerning their geometry.  Most of the results in this section are known to experts, and the primary purpose of this section is to fix notation. 

Our exposition is based on the theory developed by Altman and Kleiman in \cite{altman80}.  D'Souza's paper \cite{dSouza} develops some of the same theory using different techniques.  
While not used here, we point out that for (possibly reducible) nodal curves there are other related theories for  compactifying Jacobians.  An approach using balanced line bundles on semi-stable models is developed in Caporaso~\cite{caporaso07, caporaso08} and Melo~\cite{melo07}.   The article Alexeev~\cite{alexeev04} discusses compactified Jacobians in the broader context of degenerations of Abelian varieties.
The relationship among these spaces is discussed in 
Pandharipande~\cite[\S10]{pandharipande96} and Alexeev~\cite{alexeev04}; we refer the reader to these sources for more details.

\subsection{Preliminaries}
The basic definition is the following.

\begin{definition}
	A {\bf rank $1$, torsion-free sheaf} $I$ on an integral curve $X/k$ is a coherent sheaf $I$ on $X$ that is generically isomorphic to the structure sheaf $\mathscr{O}_{X}$ and has the property that a non-zero local section of $\mathscr{O}_{X}$ does not kill a non-zero local section of $I$.
\end{definition}

For the proof of  Proposition \ref{ArcExistLemma}, we need the following lemma about these sheaves.
\begin{lemma} \label{Lemma: KillSections}
	Suppose that $I$ is a rank 1, torsion-free sheaf on an integral curve $X$.  If $h^{0}(X,I) \ge d$, then $h^{0}(X,I(-D)) = h^{0}(X,I)-d$ for $D$ a general effective degree $d$ divisor.  Similarly, if $h^{1}(X,I) \ge d$, then $h^{1}(X,I(D)) = h^{1}(X,I) - d$ for $D$ a general degree $d$ divisor.
\end{lemma}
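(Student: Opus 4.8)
The plan is to prove the first statement and then deduce the second from it by Serre duality. Since a general point avoids the finite singular locus, a general effective divisor of degree $d$ is supported on the smooth locus $X_{\mathrm{sm}}$, so I may take $D=p_1+\cdots+p_d$ to be distinct smooth points. At such a point $I$ is locally free of rank $1$ (a finitely generated torsion-free module over the discrete valuation ring $\mathscr{O}_{X,p_i}$ is free), hence $\mathscr{O}_X(\pm D)$ are line bundles and $I(\pm D)=I\otimes\mathscr{O}_X(\pm D)$ is again rank $1$ and torsion-free. The sequence
\[
0\to I(-D)\to I\to I|_D\to 0,
\]
with $I|_D$ a length-$d$ skyscraper sheaf, already gives the automatic bound $h^0(X,I(-D))\ge h^0(X,I)-d$ for every such $D$; the content of the lemma is that this is an equality for general $D$.

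I would first treat $d=1$ and then induct. For $d=1$ the evaluation sequence $0\to I(-p)\to I\to I\otimes k(p)\to 0$ gives on cohomology $0\to H^0(I(-p))\to H^0(I)\xrightarrow{\mathrm{ev}_p} I\otimes k(p)\cong k$, so it suffices that $\mathrm{ev}_p\ne 0$ for general $p$. As $h^0(X,I)\ge 1$, pick a nonzero section $s$; since $X$ is integral and $I$ torsion-free, $s$ is nonzero at the generic point and hence nonvanishing on a dense open subset of $X_{\mathrm{sm}}$, and for $p$ there $h^0(I(-p))=h^0(I)-1$. For the inductive step, when $h^0(X,I)\ge d\ge 2$ I choose $p_1$ general so that $I(-p_1)$ is rank $1$ torsion-free with $h^0(I(-p_1))=h^0(I)-1\ge d-1$, and apply the inductive hypothesis to $I(-p_1)$ and a general effective divisor of degree $d-1$. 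This exhibits at least one $D$ achieving equality; combined with the automatic lower bound above and the upper semicontinuity of $D\mapsto h^0(X,I(-D))$ over the symmetric product of $X_{\mathrm{sm}}$, the locus of equality is a nonempty open set, which is the assertion for general $D$.

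For the second statement I would pass to Serre duality. Set $I^{*}=\mathop{\mathscr{H}\mathit{om}}_{\mathscr{O}_X}(I,\omega_X)$; because $X$ is an integral (hence Cohen--Macaulay) projective curve with rank $1$ torsion-free dualizing sheaf $\omega_X$, the sheaf $I^{*}$ is again rank $1$ and torsion-free. Duality gives $h^1(X,I(D))=h^0\big(X,\mathop{\mathscr{H}\mathit{om}}_{\mathscr{O}_X}(I(D),\omega_X)\big)=h^0(X,I^{*}(-D))$, using $\mathop{\mathscr{H}\mathit{om}}_{\mathscr{O}_X}(I\otimes\mathscr{O}_X(D),\omega_X)=I^{*}\otimes\mathscr{O}_X(-D)$ for $D$ supported on the smooth locus. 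The hypothesis $h^1(X,I)\ge d$ then reads $h^0(X,I^{*})\ge d$, and the first statement applied to $I^{*}$ yields $h^0(X,I^{*}(-D))=h^0(X,I^{*})-d$ for general $D$, that is, $h^1(X,I(D))=h^1(X,I)-d$.

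The only real subtlety is the bookkeeping behind ``general $D$'': the genuine input is the nonvanishing of a section at a general smooth point together with semicontinuity, and one must keep $D$ on the smooth locus so that every twist remains a line bundle and $I$ stays locally free there. The reduction of the $h^1$ statement to the $h^0$ statement is then immediate, the one point to check being that $I^{*}$ is rank $1$ and torsion-free, which is exactly where the integrality of $X$ is used.
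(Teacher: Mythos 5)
Your proposal is correct and follows essentially the same route as the paper: induction reducing to $d=1$, handled by choosing $p$ in the smooth locus away from the zero set of a nonzero section so that evaluation at $p$ is nonzero, and the $h^1$ statement deduced via the duality $H^1(X,I)\cong \operatorname{Hom}(I,\omega)^{\vee}$ together with the fact that $\mathop{\mathscr{H}\mathit{om}}(I,\omega)$ is again rank $1$ and torsion-free. The only cosmetic differences are that you make the ``general'' claim precise via semicontinuity over the symmetric product (the paper just exhibits one suitable divisor) and you package the $h^1$ case as an application of the first part to $I^{*}$ rather than repeating the argument.
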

\begin{proof}
	By induction, we just need to handle the case of $d=1$ and show that one suitable divisor $D$ exists.  Consider first the claim concerning global sections.  Pick a non-zero element $\sigma \in H^{0}(X,I)$.  Away from the nodes of $X$, the sheaf $I$ is a line bundle and so the zero locus of $\sigma$ is 0-dimensional. Now choose  $p \in X$ that does not lie in this zero locus and is not a node.  We have that $H^{0}(X, I(-p)) \ne H^{0}(X,I)$ since $\sigma \notin H^{0}(X, I(-p))$.  On the other hand, the codimension of $H^{0}(X,I(-p))$ in $H^{0}(X,I)$ is at most $1$ since the quotient space injects into the 1-dimensional stalk $I \otimes k(p)$.  
	
	To handle the case of $H^{1}(X,I)$, we use the duality $$H^{1}(X,I) = \operatorname{Hom}(I,\omega)^{\vee}.$$  Here the sheaf $\omega$ is the dualizing sheaf.  This sheaf is known to be rank 1 and torsion-free (\cite[\S6.5]{altman80}), so 
a non-zero element of $\operatorname{Hom}(I,\omega)$ vanishes on a 0-dimensional set.  We may now proceed as in the case of $H^{0}(X, I)$.  This completes the proof.
\end{proof}

We now define families of rank 1, torsion-free sheaves.
\begin{definition}
Suppose that $T$ is an arbitrary $k$-scheme.  A {\bf $T$-relatively rank $1$, torsion-free sheaf} is a finitely presented $\mathscr{O}_{X \times T}$-module $\mathcal{I}$ on $X \times T$ that is $T$-flat and has  fibers that are rank $1$, torsion-free sheaves in the sense defined previously. 
\end{definition}

In plain English, a $T$-relatively rank $1$, torsion-free sheaf is a flat family of rank $1$, torsion-free sheaves that is parametrized by $T$.  We will frequently abuse language and say that ``$\mathcal{I}$ is a relatively  rank 1, torsion-free sheaf," leaving the scheme $T$ implicit.  

It is convenient to single out certain families of rank $1$, torsion-free sheaves.  

\begin{definition} \label{dfntriv}
	Suppose that $\mathcal{I}$ is a $T$-relatively rank $1$, torsion-free sheaf on $X \times T$.  Let $p: X \times T \rightarrow T$ and $q: X \times T \rightarrow X$ denote the projection maps.  We say that $\mathcal{I}$ is {\bf trivial} with fiber $I$ if it is isomorphic to $q^{*}(I)$ for some rank $1$, torsion-free sheaf $I$ on $X$.
We say that $\mathcal{I}$ is {\bf iso-trivial} with fiber $I$ if there exists a line bundle $M$ on $T$ such that $\mathcal{I} \otimes p^{*}(M)$ is trivial with fiber $I$.
	We say that $\mathcal{I}$ is {\bf locally trivial} if there is an open cover $\{U_i\}$ of $X$ such that the restriction $\mathcal{I}|_{U_i\times T}$ is trivial.
\end{definition}

One can show that the condition of being iso-trivial with fiber $I$ is equivalent to the condition that the fibers of $\mathcal{I}$ are all abstractly isomorphic to $I$. We use the notion of iso-trivial to define a notion of equivalence between relatively rank $1$, torsion-free sheaves.

\begin{definition}
If $T$ is a given $k$-scheme, then we define the equivalence relation $\approx$ on $T$-relatively rank $1$, torsion-free sheaves to be the equivalence relation generated by requiring 
$\mathcal{I}$ is equivalent to $\mathcal{I} \otimes q^{*}(M)$ for every $T$-relatively rank 1, torsion-free sheaf $\mathcal{I}$ and every line bundle $M$ on $T$.
\end{definition}
In other words, $\approx$ is defined so that an iso-trivial family with fiber $I$ is equivalent to the trivial family with fiber $I$.  We will  primarily be interested in studying  $T$-relatively rank 1, torsion-free sheaves when $T$ is equal to the spectrum of either $k$ or $k[[t]]$.   In these cases, every line bundle on $T$ is trivial, so two $T$-relatively rank 1, torsion-free sheaves  are equivalent if and only if they are abstractly isomorphic as $\mathscr{O}_{X \times T}$-modules.

The degree $d$ {\bf compactified Jacobian} $\bar{J}_{X/k}^{d}$ is defined to be a scheme that represents a certain functor.

\begin{definition}
	The degree $d$ {\bf compactified Jacobian functor} $\bar{J}_{X/k}^{d}$ is defined by:
	\begin{displaymath}
		\bar{J}_{X/k}^{d}(T) =  \{ \text{fiber-wise degree } d, T\text{-rel. rank $1$, torsion-free sheaves} \} / \approx.
	\end{displaymath}
\end{definition}

	If we were working over a base that was more complicated than the spectrum of an algebraically closed field, then the above definition would need to be modified.  We will not pursue this issue here, but the relevant modifications can be found in  \cite[\S 5]{altman80}.

The basic representability theorem (\cite[Theorems 8.1, 8.5]{altman80}) states that, without further hypotheses, the compactified Jacobian functor $\bar{J}_{X/k}^{d}$ can be represented by a projective scheme.   We call this scheme the {\bf compactified Jacobian}.  

By general formalism, there is a universal family of rank 1, torsion-free sheaves $\wp$ on $\bar{J}_{X/k}^{d} \times X \rightarrow \bar{J}_{X/k}^{d}$ called the {\bf Poincar\'{e} bundle}.  It is not uniquely determined, but  any two Poincar\'{e} bundles are equivalent as relatively rank $1$, torsion-free sheaves.

\subsection{Geometry of the compactified Jacobian}\label{remkleppekass}
We now turn our attention to describing the geometry of the schemes $\bar{J}_{X/k}^{d}$.  For an arbitrary integral curve $X/k$, the geometry of the compactified Jacobian is difficult to describe.  The assumption that $X$ has at worst planar singularities, however, implies that the compactified Jacobian is a reasonably well-behaved algebro-geometric object.  More precisely, the compactified Jacobian of such a curve is a $g$-dimensional, local complete intersection variety  (\cite[Proposition 3]{Irred1977}).  
Under the same assumptions, Kleppe determined the singular locus of $\bar{J}_{X/k}^{d}$ in his (unpublished) thesis \cite{kleppe81}; the singular locus is precisely the locus of points that correspond to sheaves that fail to be locally free.  The analogous result for the Hilbert scheme can be found in Brian{\c{c}}on, Granger, and Speder  \cite{briancon}  (over $\mathbb C$), and Kleppe's result can be deduced from this.  Another reference for these results is \cite[Proposition 6.4]{kass08} (over an arbitrary algebraically closed field).

We can say much more about the local structure of the compactified Jacobian of a nodal curve.  One can give a complete description  using the theory of the presentation scheme as developed in \cite{altman90}, but for our purposes it is slightly more convenient to compute the local structure using deformation theory.  The following proposition is probably well-known (e.g., it can be found in \cite{kleppe81}), but we were unable to find a proof in print.

\begin{proposition} \label{locstrthm}
	Suppose that $X/k$ is an integral curve of genus $g$.  If $x$ is a point of $\bar{J}_{X/k}^d$ that corresponds to a rank $1$, torsion-free sheaf $I$ that fails to be locally free at $n$ nodes and no other points, then the  completion of the local ring of $\bar{J}^{d}_{X/k}$ at $x$ is isomorphic to 
\begin{displaymath}
	\left( \hat{\bigotimes}_{i=1}^{n} k[[u_i,v_i]]/(u_i v_i) \right)  \hat{\otimes} \left( \hat{\bigotimes}_{i=1}^{g-n} k[[w_i]] \right).
\end{displaymath}
Furthermore, the quotient of this local ring by the ideal $(u_1, v_1, \ldots, u_n, v_n)$  parametrizes those infinitesimal deformations that are locally trivial.
\end{proposition}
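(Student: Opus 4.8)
The plan is to identify the completed local ring of $\bar{J}^{d}_{X/k}$ at $x$ with the ring pro-representing the functor $\operatorname{Def}_I$ of infinitesimal deformations of $I$ as a coherent $\mathscr{O}_X$-module, with the curve $X$ held fixed. Since $\bar{J}^{d}_{X/k}$ carries a Poincar\'e sheaf and represents the compactified Jacobian functor, and since every line bundle on a complete local Artinian $k$-algebra is trivial, the relation $\approx$ is trivial on such test rings; hence on Artinian bases $\operatorname{Def}_I$ agrees with the deformation functor read off from $\bar{J}^{d}_{X/k}$, and passing to the limit over Artinian quotients identifies the two complete local rings. The tangent space to this functor is $\operatorname{Ext}^1_X(I,I)$ and the obstructions lie in $\operatorname{Ext}^2_X(I,I)$, so the heart of the argument is to compute these groups together with the obstruction map.

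First I would compute the two Ext-groups with the local-to-global spectral sequence $H^p(X,\mathscr{E}\mathit{xt}^{q}(I,I)) \Rightarrow \operatorname{Ext}^{p+q}_X(I,I)$. Since $X$ is a curve the only nonzero rows are $p=0,1$, and since $I$ is a line bundle away from the $n$ nodes the sheaves $\mathscr{E}\mathit{xt}^{q}(I,I)$ for $q\ge 1$ are skyscrapers supported at those nodes. A local calculation at a node $R=k[[x,y]]/(xy)$, where the stalk of $I$ is the maximal ideal $\mathfrak{m}\cong R/(x)\oplus R/(y)$, gives $\operatorname{Ext}^1_R(\mathfrak{m},\mathfrak{m})\cong\operatorname{Ext}^2_R(\mathfrak{m},\mathfrak{m})\cong k^2$, and these are the stalks of $\mathscr{E}\mathit{xt}^{1}$ and $\mathscr{E}\mathit{xt}^{2}$. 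The same local computation identifies $\Endsheaf(I)$ with the pushforward $\nu_*\mathscr{O}_{X'}$ of the structure sheaf of the partial normalization $\nu\colon X'\to X$ at the $n$ nodes, whence $H^1(X,\Endsheaf(I))=H^1(X',\mathscr{O}_{X'})=p_a(X')=g-n$ and $H^2(X,\Endsheaf(I))=0$. Feeding this into the spectral sequence yields the edge sequence $0\to H^1(X,\Endsheaf(I))\to\operatorname{Ext}^1_X(I,I)\to H^0(X,\mathscr{E}\mathit{xt}^{1}(I,I))\to 0$, so $\dim\operatorname{Ext}^1_X(I,I)=(g-n)+2n=g+n$, while $\operatorname{Ext}^2_X(I,I)=\bigoplus_{\text{nodes}}\operatorname{Ext}^2_R(\mathfrak{m},\mathfrak{m})$ is an obstruction space concentrated entirely at the nodes.

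Next I would pin down each local factor by writing the versal deformation of $\mathfrak{m}$ over the fixed ring $R=k[[x,y]]/(xy)$ explicitly. Presenting $\mathfrak{m}$ by the matrix factorization $\operatorname{diag}(x,y),\operatorname{diag}(y,x)$ of $xy$, the general first-order deformation is the off-diagonal perturbation $\phi=\left(\begin{smallmatrix} x & u \\ v & y \end{smallmatrix}\right)$, and the requirement that $\phi$ remain a matrix factorization of the fixed polynomial $xy$ forces its cofactor product to equal $(xy-uv)\cdot\mathrm{Id}$, i.e.\ the single relation $uv=0$. Thus the local deformation ring is $k[[u,v]]/(uv)$: the two off-diagonal entries span $\operatorname{Ext}^1_R(\mathfrak{m},\mathfrak{m})=k^2$, while the $2$-dimensional obstruction space $\operatorname{Ext}^2_R(\mathfrak{m},\mathfrak{m})$ is met only in the one direction spanned by the class of $uv$, so a single equation cuts out the node.

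Finally I would assemble the global ring. The locally trivial deformations are classified by $H^1(X,\Endsheaf(I))$ with obstructions in $H^2(X,\Endsheaf(I))=0$, so they form a smooth subfunctor of dimension $g-n$, contributing the factor $k[[w_1,\dots,w_{g-n}]]$ and accounting for the last sentence of the proposition once one checks that setting all $u_i,v_i$ to zero is exactly the locally trivial condition (it projects onto the sub $H^1(X,\Endsheaf(I))$ of the edge sequence). Restricting a deformation to the formal neighborhood of each node defines a morphism from $\operatorname{Def}_I$ to the product of the $n$ local functors above; since the obstructions are supported at the distinct nodes and the locally trivial deformations are unobstructed, this morphism should be formally smooth with fiber the locally trivial deformations, and comparing tangent spaces (both of dimension $g+n$) would show it is an isomorphism onto the product. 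This gives the claimed $\left(\hat{\bigotimes}_{i=1}^{n} k[[u_i,v_i]]/(u_i v_i)\right)\hat{\otimes} k[[w_1,\dots,w_{g-n}]]$. I expect the main obstacle to be precisely this last gluing step: upgrading the additive decomposition of the tangent and obstruction spaces into a multiplicative (completed tensor) decomposition of the versal rings requires genuinely controlling the interaction between the local deformations at different nodes and the global locally trivial deformations, for instance by verifying Schlessinger's criteria for the product and the formal smoothness of the localization morphism, rather than merely matching dimensions.
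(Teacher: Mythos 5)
Your proposal is correct, and it shares its global skeleton with the paper's proof: both identify the completed local ring with the ring pro-representing the deformation functor of $I$ (using triviality of line bundles on Artinian bases), both use the local-to-global $\operatorname{Ext}$ spectral sequence to see that the restriction map from global deformations to the product of the local deformation functors at the nodes is formally smooth, and both identify the locally trivial deformations with the fiber of that map, cut out by $(u_1,v_1,\dots,u_n,v_n)$. Where you genuinely diverge is the key local step. You compute the miniversal ring of the node-local functor directly via matrix factorizations (the cofactor identity $\phi\cdot\operatorname{adj}(\phi)=(xy-uv)\mathrm{Id}$ forcing the single relation $uv=0$), whereas the paper deliberately avoids any local computation: it takes the standard nodal plane cubic $X_0$, invokes Altman--Kleiman's theorem that $X_0\to\bar{J}^{-1}_{X_0/k}$ is an isomorphism -- so the global deformation functor of the ideal sheaf of the node is pro-represented by $k[[u,v]]/(uv)$, the completed local ring of $X_0$ at its node -- and then transfers this to the local functor by formal smoothness and a tangent-space dimension count. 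Your route is more self-contained (no autoduality input, and your identification $\Endsheaf(I)\cong\nu_*\mathscr{O}_{X'}$ makes the count $h^1=g-n$ transparent, which the paper leaves implicit), but it is also where real care is needed: exhibiting the family over $k[[u,v]]/(uv)$ with bijective Kodaira--Spencer map only shows that the miniversal ring surjects onto $k[[u,v]]/(uv)$; to rule out a smaller ideal of relations (e.g.\ the unobstructed ring $k[[u,v]]$) you must verify that the deformation does not extend past $uv=0$, i.e.\ that the quadratic obstruction is nonzero -- a short check (the equation $xb+ya=uv$ has no solution in $k[[x,y]]$-coefficients), or a normal-form argument for deformed matrix factorizations. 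Finally, the formal smoothness of $\operatorname{Def}_I\to\prod G_i$, which you flag as the main expected obstacle, is treated in the paper at essentially the level of rigor you propose: it is asserted to follow from an examination of the spectral sequence, just as you argue from the vanishing of $H^2(X,\Endsheaf(I))$ and the concentration of obstructions at the nodes; so this step is a shared burden of both arguments rather than a defect of yours.
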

\begin{proof}
	Suppose that we are given a curve $X$ and a sheaf $I$ as in the hypothesis.  Let $F$ denote the deformation functor that parametrizes infinitesimal deformations of $I$.  This functor is pro-represented by the completion of the local ring of $
	\bar{J}_{X/k}^{d}$ at $x$, and we will prove the proposition by computing the functor $F$.  
	
	Suppose that the sheaf $I$ fails to be locally free precisely at the points $p_1, \ldots, p_n$.  For $j = 1, \ldots, n$, set $\hat{\mathscr{O}}_j$ equal to the completion of the local ring of $X$ at $p_j$ and $G_i$ equal to the functor that 
	parametrizes infinitesimal deformations of the sheaf $I|_{\hat{\mathscr{O}}_j}$ on $\hat{\mathscr{O}}_j$.
	
	An examination of the local-to-global spectral sequence computing the groups $\operatorname{Ext}^{*}(I,I)$ shows that the natural restriction map $$F \rightarrow G_1 \times \ldots \times G_n$$ is formally smooth.  Furthermore, the locally 
	trivial deformations of $I$ are precisely the deformations that map to the trivial deformations under the maps $F \rightarrow G_i$, $i= 1, \ldots, n$.  To complete the proof, it is enough to show that the ring $k[[u,v]]/(u v)$ is a miniversal 
	deformation ring for every $G_i$.
	
	The deformation functors $G_i$ are all abstractly isomorphic.  More precisely, the completion of the local ring of $X$ at $p_i$ is abstractly isomorphic to the algebra $\hat{\mathscr{O}} = k[[x,y]]/(x y)$.  Under this identification, the restriction of 
	$I$ to $\hat{\mathscr{O}}_{i}$ can be identified with the ideal $(x,y)$ (considered as an abstract module).  Each functor $G_i$ is isomorphic to the functor $G$ that parametrizes infinitesimal deformations of $(x,y)$ as a module over $k[[x,y]]/(xy)$.

	To show that $G$ has the desired form,  consider the ring $R= k[[u,v]]/(u v)$ and the $\hat{\mathscr{O}} \hat{\otimes} R$-module given by the ideal $\mathcal{I} = (x-u, y-v)$.  The pair $(R,\mathcal{I})$ defines a formal deformation of the module 
	$(x,y)$.  To complete the proof, we show that this formal deformation is miniversal.

	Rather than proving this claim by a direct computation, we will give a computation-free proof using the Abel map.  Consider first the special case of  the standard irreducible, nodal plane cubic $X_0/k$.  For this curve,  the natural map $X_0 
	\rightarrow \bar{J}^{-1}_{X_0/k}$ is an isomorphism  (\cite[Theorem 8.8]{altman80}), and this isomorphism identifies the ideal sheaf of the diagonal in $X_0 \times X_0$ with a Poincar\'{e} bundle on $X_0 \times \bar{J}_{X_0/k}^{-1}$.  In 
	particular, if we let $I_0$ denote the ideal sheaf of the node on $X_0$ then the associated global deformation functor $F$ is pro-represented by the ring $k[[u,v]]/(uv)$. For dimension reasons  the natural map $F \rightarrow G$ from global 
	deformations to local deformations is an isomorphism on tangent spaces, and so $F$ is a miniversal deformation of $G$.

	Thus we have proven the claim for the curve $X_0/k$.  But the functor $G$ is local and independent of the global curve under consideration, and thus claim for the functor $G$ in the  general case follows as well.  This completes the proof.
 \end{proof}

As a corollary, we can compute the multiplicity of $\bar{J}_{X/k}^{d}$ at a point.

\begin{corollary}\label{CorMultJac}
	Suppose that $X/k$ is an integral curve.  If $x$ is a point of $\bar{J}_{X/k}^{d}$ that corresponds to a rank $1$, torsion-free sheaf that fails to be locally free at $n$ nodes  and no other points, then we have that
\begin{displaymath}
	\operatorname{mult}_{x}\bar{J}_{X/k}^d = 2^n.
\end{displaymath}
\end{corollary}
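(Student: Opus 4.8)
The plan is to read the multiplicity off directly from the explicit presentation of the completed local ring furnished by Proposition \ref{locstrthm}. The first observation I would record is that the Hilbert--Samuel multiplicity of a point on a scheme depends only on the completion of the local ring there, so that
\[
	\operatorname{mult}_x \bar{J}^d_{X/k} = e(A), \qquad
	A = \Bigl( \hat{\bigotimes}_{i=1}^{n} k[[u_i,v_i]]/(u_i v_i) \Bigr) \hat{\otimes} \Bigl( \hat{\bigotimes}_{i=1}^{g-n} k[[w_i]] \Bigr).
\]
This reduces the statement to a purely commutative-algebraic computation of the multiplicity of $A$.

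Next I would invoke the multiplicativity of multiplicity under the completed tensor product over $k$: for local $k$-algebras $B$ and $C$ with residue field $k$ there is a canonical isomorphism of associated graded rings $\operatorname{gr}_{\mathfrak m}(B \,\hat\otimes\, C) \cong \operatorname{gr}_{\mathfrak m}(B) \otimes_k \operatorname{gr}_{\mathfrak m}(C)$, so the corresponding Hilbert series multiply. Writing each Hilbert series in the form $P(t)/(1-t)^{\dim}$ and using that the dimensions add, one finds $e(B \,\hat\otimes\, C) = e(B)\cdot e(C)$. It therefore suffices to compute the multiplicity of each tensor factor. The factor $k[[w_i]]$ is regular, hence of multiplicity $1$. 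For the node $k[[u,v]]/(u v)$ the relation $uv=0$ forces $\mathfrak m^j = (u^j, v^j)$ for all $j \ge 1$, whence $\dim_k \mathfrak m^j / \mathfrak m^{j+1} = 2$; equivalently the Hilbert series of its tangent cone is $(1+t)/(1-t)$, whose numerator takes the value $2$ at $t=1$. Thus this factor has multiplicity $2$, and combining the factors gives $e(A) = 2^n \cdot 1^{g-n} = 2^n$.

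Since Proposition \ref{locstrthm} already carries all of the geometric content, I expect no serious obstacle here; the only point requiring a little care is the multiplicativity invoked above, and this can be sidestepped entirely by computing the tangent cone by hand. Indeed, $\operatorname{Spec} \operatorname{gr}_{\mathfrak m}(A)$ is cut out inside $\mathbb A^{g+n}$ by the equations $u_i v_i = 0$ for $i=1,\dots,n$, so it is the union of the $2^n$ linear subspaces obtained by selecting, for each node, which of $u_i, v_i$ vanishes. Each such subspace has dimension $g$ and hence degree $1$, and the defining ideal is radical, so the tangent cone is reduced and equidimensional of degree $2^n$. As the degree of an equidimensional tangent cone equals the multiplicity, this recovers $\operatorname{mult}_x \bar J^d_{X/k} = 2^n$ directly.
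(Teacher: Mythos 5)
Your proposal is correct and follows the same route the paper intends: Corollary \ref{CorMultJac} is stated there as an immediate consequence of Proposition \ref{locstrthm}, with the multiplicity computation for the standard local model left implicit, and you have simply supplied that computation. Both of your arguments (multiplicativity of Hilbert series under $\hat\otimes$, and the direct identification of the tangent cone as the reduced union of $2^n$ coordinate $g$-planes cut out by the radical ideal $(u_1v_1,\dots,u_nv_n)$) are sound, so there is nothing to correct.
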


\subsection{The Abel Map} \label{AbelSection}
Here we review the theory of the Abel map as developed in \cite{altman80}.  We only discuss the aspects of the theory that are relevant to our study of the theta divisor, and our discussion  is adequate for Gorenstein curves only.  In this paper, we work almost exclusively with curves satisfying the stronger condition that the singularities are planar.  A more satisfactory theory can be developed for curves with non-Gorenstein singularities, but there are some significant technical
complications.

The basic definition is the following.

\begin{definition}
	The {\bf Abel map} is the morphism $A: \operatorname{Hilb}_{X/k}^{d} \rightarrow \bar{J}_{X/k}^{2 g -2 -d}$ given by the rule:
	\begin{displaymath}
		Z \in \operatorname{Hilb}_{X/k}^{d}(T) \mapsto \mathcal{I}_{Z} \otimes \omega_{X/T} \in \bar{J}_{X/k}^{2g-2-d}(T),
	\end{displaymath}
	where $\mathcal{I}_{Z}$ is the ideal sheaf that defines the closed subscheme $Z$ and $\omega_{X/T}$ is the relative dualizing sheaf.
\end{definition}

This is dual to the convention used for non-singular curves, and it is necessary to adopt this convention in order to extend the Abel map to a map that is well-defined on all of $\operatorname{Hilb}_{X/k}^{d}$.

The Abel map realizes the Hilbert scheme as a (non-flat) family of projective spaces over the compactified Jacobian.  Given a point $x$ of $\bar{J}_{X/k}^{d}$ that corresponds to a sheaf $I$, the fiber $A^{-1}(x)$ is canonically isomorphic to the space $\mathbb{P}H^{1}(X,I)$.  This projective space should thus be thought of as a generalized linear system.  
This identification provides a geometric interpretation of the term $h^{0}(X,I)$ in Theorem \ref{teoC}.  By the Riemann-Roch formula, we have that $h^{0}(X,I)=h^{1}(X,I)$, and so the order of vanishing of $\Theta$ at $x$ is equal to $1$ more than the dimension of the projective space $A^{-1}(x)$.

While not explicitly stated, the identification of the fibers of $A$ follows from results in \cite[\S 4]{altman80}.  In that paper, the authors work with the map $\operatorname{Hilb}_{X/k}^{d} \rightarrow \bar{J}_{X/k}^{-d}$ given by $Z \mapsto I_{Z}$ and show that the fiber over a point corresponding to a sheaf $I$ can be identified with $\mathbb{P}(\operatorname{Hom}(I,\mathscr{O}_{X})^{\vee})$.  Our claim follows from duality theory.

Many of the modern proofs of the RST  establish the result by using the Abel map to reduce to a statement concerning the Hilbert scheme of points, a variety whose geometry is directly accessible (e.g.  \cite{kempf73}, \cite[Example 4.3.2]{fulton}).  For an integral, nodal curve $X$,  the Hilbert scheme $\operatorname{Hilb}_{X/k}^{d}$ is itself singular, and the presence of singularities makes it challenging to  fully generalize these arguments.  Similar difficulties were encountered in the paper of Smith-Varley \cite{smith2001}, concerning singularities of theta divisors for Prym varieties.  We do, however, use such an argument to prove Proposition \ref{AbelEvidenceProp}.  

In order to prove this result and others, we need to recall two facts about the Abel map.  Theorem 8.4 of \cite{altman80} states that if \ $d$ is an integer satisfying $d > 2 g -2$ and $X$ is Gorenstein, then the Abel map $\text{Hilb}^{d}_{X/k} \rightarrow \bar{J}_{X/k}^{2 g -2 -d}$ is a smooth fibration.   The Abel map can also be used to completely describe the compactified Jacobian of a genus $1$ curve.  If $X/k$ has genus $1$, then Theorem 8.8 of the same paper asserts that the Abel map $\text{Hilb}_{X/k}^{-1} = X \rightarrow \bar{J}^{-1}_{X/k}$ is an isomorphism.

\section{The Theta Divisor} \label{ThetaSection}
\subsection{Preliminaries}
There is an effective divisor on $\bar{J}^{g-1}_{X/k}$ that plays a role analogous to that of the classical theta divisor on a Jacobian.  For the compactifications studied in this paper, this divisor was constructed by Soucaris in \cite{soucaris} and by Esteves in \cite{esteves97}. We take the definition of theta divisor to be the following:
\begin{definition} \label{Def: Theta}
	Let $p \colon X \times \bar{J}_{X/k}^{g-1} \to \bar{J}_{X/k}^{g-1}$ denote the projection map.  The {\bf theta divisor} $\Theta$ is the $0$-th Fitting subscheme of $R^1p_{*}(\wp)$.
\end{definition}  

The sheaf $R^{1}p_{*}(\wp)$ depends on a particular choice of Poincar\'{e} bundle, but an application of the projection formula shows that the Fitting subscheme is independent of this choice.

We can derive a useful expression for $\Theta$ using the machinery of coherent cohomology.  The projection $p$ has relative dimension $1$ and $\wp$ is  $p$-flat, so we can conclude that there exists a 2-term complex of vector bundles $K^0 \stackrel{d}{\rightarrow} K^1$ that computes the higher direct images of $\wp$ ``universally."  That is, for any morphism $f \colon T \to \bar{J}_{X/k}^{g-1}$, we can form the Cartesian diagram
\begin{displaymath}
	\begin{CD}
		X \times T 		@>g>>		 	X \times \bar{J}_{X/k}^{g-1} 	\\
		@Vp_{T}VV					@VpVV 					\\
		T			@>f>> 					\bar{J}_{X/k}^{g-1}.
	\end{CD}
\end{displaymath}
The complex $(f^{*}K^{\cdot}, \operatorname{d}_{T})$ has the property that it computes the cohomology of $g^{*}(\wp)$ in the sense that $H^{i}(f^{*}K^{\cdot}) \cong R^{i}(p_{T})_{*}(g^{*}\wp)$ for all $i$.  This complex is not unique, but any two complexes
with this property are quasi-isomorphic.  

As the general degree $g-1$ line bundle has no cohomology, we have $p_{*}(\wp) = 0$ and $R^{1}p_{*}(\wp)$ is supported on a set of positive codimension.  In terms of the complex $(K^{\cdot},d)$, this translates into the facts that $\operatorname{rank}(K^0) = \operatorname{rank}(K^1)$ and $(K^{\cdot},d)$ is a locally free resolution of $R^1p_{*}(\wp)$. By definition, the divisor $\Theta$ is the vanishing locus of  $\det(d)$.  This is the desired expression for $\Theta$.

The theta divisor has reasonable algebro-geometric properties when $X$ is a curve with planar singularities.  It is proven in \cite[\S 3]{esteves97} and \cite{soucaris} that $\Theta$ is an ample Cartier divisor in $\bar{J}_{X/k}^{g-1}$ and, as an abstract scheme, it is an integral, local complete intersection variety.  Tracing through the construction of $\Theta$, we see that the points of $\Theta$ correspond to rank $1$, torsion-free sheaves $I$ with $h^{0}(X,I) \ne 0$.  

More generally, one can interpret the theta divisor in terms of the Abel map.  The theta divisor of a non-singular curve is equal to the image of the Abel map, and a similar statement holds for the theta divisor associated to the compactified Jacobian of a curve with at worst planar singularities.  We have defined an Abel map for Gorenstein curves with the property that the fiber over a point corresponding to a sheaf $I$ is equal to $\mathbb{P}H^{1}(X,I)$.  In particular, this fiber is non-empty if and only if $h^{1}(X,I)>0$ or, in other words, the point lies on the theta divisor.  This shows that the set-theoretic image of Abel map is equal to $\Theta$.  Since both $\Theta$ and $\text{Hilb}^{g-1}_{X/k}$ are reduced, the set-theoretic image of the Abel map with its reduced scheme structure, the scheme-theoretic image of the Abel map, and the theta divisor all coincide as closed sub-schemes of $\bar{J}_{X/k}^{g-1}$.  This argument was explained to the authors by Eduardo Esteves.

Although it will not be needed in this paper, in light of the discussion at the beginning of \S2, we point out that the problem of constructing a theta divisor for a (possible reducible) nodal curve has been  studied from the perspective of balanced line bundles on quasi-stable models (e.g. \cite[\S4]{caporaso07}, esp. Remark 2.4.2) and semi-stable sheaves  on stable curves, and degenerations of Abelian varieties (e.g. \cite[Lemma 3.8, Theorem 5.3]{alexeev02}).

\begin{remark}  \label{Remark: NonPlaneExample}
	In this paper, we have mostly avoided discussing the theta divisor of a curve with non-planar singularities 
	as the properties of this subscheme are not yet 
	well understood (but see Remark~\ref{Remark: NonPlaneExample}).  When $X$ is a curve with non-planar singularities, one can still use the formalism of determinants to construct a theta divisor, but there are two potential issues: the resulting subscheme $\Theta$ is not known to be a Cartier 
	divisor and the subscheme is not known to be equal to the image of the Abel map.  These issues do not arise if one considers only points corresponding to line bundles as in Kempf \cite{kempf70}.  We refer the interested reader to Soucaris \cite{soucaris} for a more detailed 
	discussion of these issues.  
\end{remark}

\subsection{Theta divisors and test arcs} \label{SubSection:TestArcsAndTheta}
 In this section, let $S=\operatorname{Spec} k[[t]]$ be the formal arc, with maximal ideal $(t)$ denoted by $0$.  For a map $S\to V$ from the arc to a scheme $V$, we denote the pull-back to $S$ of a Cartier divisor $D$ on $V$ by $D|_S$.
There are two results that allow us to use test arcs to study theta divisors.  The first is a standard result about the behavior of $\Theta$ with respect to the operation of restricting to a test arc.

\begin{proposition} \label{ThetaFuncProp}
	Suppose $X/k$ is an integral curve with at worst planar singularities and  let $S \rightarrow \bar{J}_{X/k}^{g-1}$ be an arc corresponding to an $S$-relatively rank $1$, torsion-free sheaf  $\mathcal{I}$ on $X \times S$, which is a deformation of the sheaf $\mathcal I_0=I$.  Assume that $H^{0}(X \times S,\mathcal{I})=0$.  Then $
	\Theta|_S = \operatorname{dim}_{k}H^{1}(X \times S, \mathcal{I}) \{ 0 \}$.  In other words, we have that
	\begin{displaymath}
		\operatorname{mult}_0\Theta|_S=
		\operatorname{dim}_{k}H^{1}(X \times S, \mathcal{I})\ge h^1(X,I).
	\end{displaymath}
	We recall the definition of the multiplicity at the beginning of \S\ref{MultiplicitySection}.
\end{proposition}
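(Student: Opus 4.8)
The plan is to reduce the entire statement to the elementary-divisor (Smith normal form) theory of a single map of free modules over the discrete valuation ring $k[[t]]$. Recall from the discussion preceding Definition \ref{Def: Theta} that there is a two-term complex of vector bundles $K^{0} \xrightarrow{d} K^{1}$ on $\bar{J}_{X/k}^{g-1}$, with $\operatorname{rank} K^{0} = \operatorname{rank} K^{1} = r$, that universally computes the higher direct images of $\wp$, and that $\Theta$ is cut out by $\det(d)$. Pulling this complex back along $f \colon S \to \bar{J}_{X/k}^{g-1}$ yields a map $d_{S} \colon K^{0}_{S} \to K^{1}_{S}$ of free $k[[t]]$-modules of equal rank $r$ whose cohomology computes $H^{\bullet}(X \times S, \mathcal{I})$; in particular $\ker(d_{S}) = H^{0}(X \times S, \mathcal{I})$ and $\operatorname{coker}(d_{S}) = H^{1}(X \times S, \mathcal{I})$. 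The local equation of $\Theta|_{S}$ is $f^{*}(\det d) = \det(d_{S})$, so $\operatorname{mult}_{0} \Theta|_{S} = \operatorname{ord}_{t} \det(d_{S})$.

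First I would use the hypothesis $H^{0}(X \times S, \mathcal{I}) = 0$, which says exactly that $d_{S}$ is injective; since $k[[t]]$ is a domain, this forces $\det(d_{S}) \ne 0$, so that $\operatorname{ord}_{t}\det(d_{S})$ is finite and $\Theta|_{S}$ is a genuine finite multiple of the closed point $\{0\}$. This is the one place the global hypothesis is essential. Next I would put $d_{S}$ in Smith normal form: choosing suitable bases of the free modules, $d_{S} = \operatorname{diag}(t^{a_{1}}, \ldots, t^{a_{r}})$ with each $a_{i} \ge 0$ and all finite. From this normal form all three relevant quantities can be read off directly, namely $\operatorname{ord}_{t}\det(d_{S}) = \sum_{i} a_{i}$, while $\operatorname{coker}(d_{S}) \cong \bigoplus_{i} k[[t]]/(t^{a_{i}})$ has dimension $\sum_{i} a_{i}$ over $k$. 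Comparing these gives the desired equality $\operatorname{mult}_{0}\Theta|_{S} = \dim_{k} H^{1}(X \times S, \mathcal{I})$.

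For the inequality, I would base change the complex once more, now along the inclusion of the closed point $\operatorname{Spec} k \hookrightarrow S$, whose composite with $f$ corresponds to the fiber $I$. By the universal property the reduced complex $\bar{d} = d_{S} \otimes_{k[[t]]} k$ computes $H^{\bullet}(X, I)$, so $h^{1}(X,I) = \dim_{k}\operatorname{coker}(\bar{d})$. In the normal form, the $i$-th diagonal entry of $\bar{d}$ is the image of $t^{a_{i}}$ in $k$, which vanishes exactly when $a_{i} \ge 1$; hence $\dim_{k}\operatorname{coker}(\bar{d}) = \#\{i : a_{i} \ge 1\}$. Since $\sum_{i} a_{i} \ge \#\{i : a_{i} \ge 1\}$, the inequality $\dim_{k} H^{1}(X \times S, \mathcal{I}) \ge h^{1}(X,I)$ follows at once.

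The argument is essentially formal once the determinantal description of $\Theta$ is in hand, so I do not expect a deep obstacle; the only points requiring care are the bookkeeping that the two base-change operations (to $S$ and to its closed point) are compatible with a single choice of normal form, and confirming that the recalled definition of $\operatorname{mult}_{0}$ for a divisor on the arc $S$ agrees with $\operatorname{ord}_{t}$ of its local equation.
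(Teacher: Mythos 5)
Your proposal is correct, and its core is exactly the paper's argument: pull back the universal two-term complex $K^{0}\xrightarrow{d}K^{1}$ along the arc, use functoriality of the determinant to identify a local equation of $\Theta|_{S}$ with $\det(d_{S})$, and diagonalize $d_{S}$ over $k[[t]]$ so that $\operatorname{ord}_{t}\det(d_{S})=\dim_{k}\operatorname{coker}(d_{S})=\dim_{k}H^{1}(X\times S,\mathcal{I})$. Where you differ from the paper is in the two auxiliary steps, and in both places your treatment is more self-contained. To see that $\Theta|_{S}$ is a finite multiple of $\{0\}$ (equivalently, that $H^{1}(X\times S,\mathcal{I})$ is a torsion module), the paper invokes the Riemann--Roch theorem and the theorem on base change, whereas you observe that $H^{0}(X\times S,\mathcal{I})=\ker(d_{S})=0$ forces $\det(d_{S})\neq 0$ over the domain $k[[t]]$; the two are equivalent, but yours is purely linear-algebraic. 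For the inequality $\dim_{k}H^{1}(X\times S,\mathcal{I})\ge h^{1}(X,I)$, the paper cites the long exact sequence attached to $0\to\mathcal{I}\xrightarrow{t}\mathcal{I}\to I\to 0$ (following \cite{yano}), which identifies $H^{1}(X,I)$ with $H^{1}(X\times S,\mathcal{I})/tH^{1}(X\times S,\mathcal{I})$; you instead reduce the Smith normal form modulo $t$ and count $\#\{i: a_{i}\ge 1\}\le\sum_{i}a_{i}$, thereby extracting both the equality and the inequality from a single diagonalization. Both variants are sound: yours unifies the proposition into one computation with the complex, while the paper's exact-sequence argument has the small advantage of being independent of any choice of complex or bases.
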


\begin{proof}
	Let $S \rightarrow \bar{J}_{X/k}^{g-1}$ be a given arc that corresponds to a sheaf $\mathcal{I}$ as in the hypothesis.  The fact that $\operatorname{dim}_{k}H^{1}(X \times S, \mathcal{I})\ge h^1(X,I)$ is clear from consideration of the sequence $0\to \mathcal I \stackrel{t}{\to} \mathcal I\to I\to 0$ (\cite[\S 1]{yano}).  The rest of the proposition follows from the general formalism of the determinant;  both the proof of Proposition 3.9 in \S5.3.2 of \cite{friedman} and the proof at the beginning of Section 1 in \cite{yano} generalize to our proposition.  

	We recall the proof for the sake of completeness.  Immediately after Definition \ref{Def: Theta} we outlined the construction of a particular 2-term complex $(K^{\cdot}, \operatorname{d})$ with the property that the zero locus of $\det \operatorname{d}$  is $\Theta$.  This complex is constructed so that it computes the cohomology of $\wp$.  One consequence of this fact is that the zero-th and first cohomology groups of the restricted complex $K_{S}^{0} \stackrel{\operatorname{d}_{S}}{\longrightarrow} K_{S}^{1}$ are $H^{0}(X \times S, \mathcal{I})$ and $H^{1}(X \times S, \mathcal{I})$ respectively.  On the other hand, the determinant $\det \operatorname{d}_{S}$ computes $\Theta|_{S}$ by functoriality.  We prove the proposition by investigating the map $\operatorname{d}_{S}$.
	
	 The vanishing of $H^{0}(X\times S, \mathcal{I})$, together with the Riemann-Roch Theorem and the Theorem on Base Change, implies that $H^{1}(X \times S, \mathcal{I})$ is a torsion $k[[t]]$-module.
	 Consequently, 
$$H^{1}(X \times S, \mathcal{I})
=k[[t]]/(t^{e_1}) \oplus \dots \oplus k[[t]]/(t^{e_m})$$
for some non-negative integers $e_1,\ldots,e_m$.	 
	As we are working over the spectrum of a power series ring, the modules $K^0_{S}$ and $K^1_S$ are free.  Thus, fixing suitible bases, we may represent  $\operatorname{d}_{S}$ as a matrix of the form
	\begin{displaymath}
		\left(
			\begin{matrix}
				t^{e_1}		&	0			& \dots	& 0 \\
				0			&	t^{e_2}		& \dots	& 0 \\
				\vdots		&	\vdots		& \ddots	& \vdots \\
				0			& 	0			& \dots	& t^{e_m}
			\end{matrix}
		\right).
	\end{displaymath}
	 The determinant of this matrix is $t^e$, with $e := e_1 + \dots + e_m$.  
	This is visibly the dimension of the vector space $H^{1}(X \times S, \mathcal{I})$.
\end{proof}

The next result states that there are  in fact test arcs  that achieve this lower bound.

\begin{proposition} \label{teotestarc}
Suppose $X/k$ is an integral curve with at worst planar singularities, and $x$ is a point of $\bar{J}_{X/k}^{g-1}$ that corresponds to a sheaf $I$.  Then there exists an arc $S\to \bar{J}_{X/k}^{g-1}$ with $0\mapsto x$ such that $
	\Theta|_S=h^1(X,I) \{ 0 \}$; i.e., 
$$	\operatorname{mult}_0\Theta|_S=h^1(X,I).$$

Furthermore, the arc can be chosen to correspond to a locally trivial family of sheaves.
\end{proposition}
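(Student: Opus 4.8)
The plan is to use Proposition \ref{ThetaFuncProp} to reduce the statement to the construction of a single, well-chosen family, and then to pin down the $k[[t]]$-module structure of its cohomology. Write $r=h^1(X,I)=h^0(X,I)$. By Proposition \ref{ThetaFuncProp} it suffices to produce a locally trivial $S$-relatively rank $1$, torsion-free sheaf $\mathcal I$ on $X\times S$ with $\mathcal I_0\cong I$ and $\dim_k H^1(X\times S,\mathcal I)=r$; the hypothesis $H^0(X\times S,\mathcal I)=0$ then comes for free, since a finite-dimensional $M:=H^1(X\times S,\mathcal I)$ is $k[[t]]$-torsion, which forces $h^1(\mathcal I_\eta)=0$ on the generic fiber and hence $h^0(\mathcal I_\eta)=0$ by $\chi=0$. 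Writing $M\cong\bigoplus_i k[[t]]/(t^{e_i})$ and using that the top direct image $R^1p_*$ commutes with base change, one has $M/tM\cong H^1(X,I)$, which is $r$-dimensional. Thus whenever $M$ is finite-dimensional one gets $\dim_k M\ge r$, with equality exactly when every $e_i=1$, i.e. when $tM=0$.

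The key simplification is that the condition $tM=0$ is detected at first order. I would look for $\mathcal I$ of the form $q^*I\otimes\mathcal M$, with $\mathcal M$ a line bundle on $X\times S$ satisfying $\mathcal M_0\cong\mathcal O_X$; such an $\mathcal I$ is automatically locally trivial, since a line bundle is trivial locally on $X$. Let $\beta\in H^1(X,\mathcal O_X)$ be the first-order class of $\mathcal M$. Because $R^1p_*$ commutes with base change, the restriction $\mathcal I_1:=\mathcal I\otimes_{k[[t]]}k[t]/(t^2)$ satisfies $H^1(X\times S_1,\mathcal I_1)=M/t^2M$, and the six-term cohomology sequence of $0\to I\xrightarrow{t}\mathcal I_1\to I\to0$ gives $\dim_k M/t^2M=2r-\operatorname{rank}(\cup\beta)$, where $\cup\beta\colon H^0(X,I)\to H^1(X,I)$ is cup product with $\beta$. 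Hence, if $\cup\beta$ is an isomorphism, then $\dim_k M/t^2M=r=\dim_k M/tM$, so by Nakayama's lemma $tM=0$ and $\dim_k M=r$. The problem is therefore reduced to the purely curve-theoretic task of producing a class $\beta$ for which $\cup\beta$ is an isomorphism, together with an arc realizing it.

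This last task is the heart of the argument, and it is exactly what Lemma \ref{Lemma: KillSections} is designed to supply. Using both halves of that lemma, I would choose a general reduced effective divisor $D=p_1+\dots+p_r$ supported on the smooth locus with $h^0(X,I(-D))=0$ and $h^1(X,I(D))=0$ simultaneously (each is a dense open condition). The sequence $0\to I(-D)\to I\to I|_D\to0$ then identifies $H^0(X,I)$ with $\bigoplus_i I_{p_i}/\mathfrak m_{p_i}$, while $0\to I\to I(D)\to I(D)|_D\to0$ identifies $H^1(X,I)$ with $\bigoplus_i I(D)_{p_i}/\mathfrak m_{p_i}$ through its connecting map $\partial_I$. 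I would then take $\beta=\partial(\tau)$, where $\partial\colon H^0(\mathcal O_D(D))\to H^1(X,\mathcal O_X)$ is the connecting map of $0\to\mathcal O_X\to\mathcal O_X(D)\to\mathcal O_D(D)\to0$ and $\tau=(\tau_i)_i$ has every component nonzero. By the compatibility of cup products with connecting homomorphisms, $\cup\beta$ becomes, under the two identifications, the diagonal map $(s(p_i))_i\mapsto(\tau_i\,s(p_i))_i$, which is an isomorphism precisely because each $\tau_i\neq0$.

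Finally I would integrate $\beta$ to an honest arc: realize $\mathcal M$ as $\mathcal O_{X\times S}(\mathcal D-D\times S)$ for a relative effective divisor $\mathcal D$ moving the $p_i$ along arcs in the smooth locus with tangent directions $\tau_i$ (equivalently, use the smoothness of the locally trivial deformations recorded in Proposition \ref{locstrthm} to lift $\beta$ to an arc in $\operatorname{Pic}^0_{X/k}$ through $[\mathcal O_X]$). Then $\mathcal I=q^*I\otimes\mathcal M$ is locally trivial with $\mathcal I_0\cong I$ and first-order class $\beta$, so $\dim_k H^1(X\times S,\mathcal I)=r$ by the second paragraph, $H^0(X\times S,\mathcal I)=0$ by the first, and Proposition \ref{ThetaFuncProp} yields $\operatorname{mult}_0\Theta|_S=r=h^1(X,I)$. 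I expect the main obstacle to be the third step, namely guaranteeing that some first-order direction $\beta$ makes $\cup\beta$ an isomorphism, a kind of transversality (``simple contact'') condition. The diagonalization above shows that this reduces cleanly to the existence of the divisor $D$ furnished by Lemma \ref{Lemma: KillSections}, which is precisely why that lemma is isolated in advance.
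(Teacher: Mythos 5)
Your proposal is correct and takes essentially the same route as the paper: you build the same family $\mathcal{I}=q^{*}I\otimes\mathcal{O}_{X\times S}(\mathcal{D}-D\times S)$ from a divisor $D$ supplied by Lemma \ref{Lemma: KillSections}, and the heart of the argument is the same first-order cup-product computation showing $\cup\beta$ is injective (equivalently, an isomorphism). The only difference is the final bookkeeping, where you deduce $\dim_k H^1(X\times S,\mathcal{I})=h^1(X,I)$ and $H^0(X\times S,\mathcal{I})=0$ via base change for $R^1p_*$ together with Nakayama's lemma, whereas the paper (following \cite{yano}) runs an induction over the infinitesimal neighborhoods $S_n$; the two are equivalent.
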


\begin{proof}
The result follows from the previous proposition and the lemma below, which asserts the existence of  
an $S$-relatively rank $1$, torsion-free sheaf  $\mathcal{I}$ on $X \times S$  satisfying the assumptions of the proposition.
\end{proof}

\begin{lemma} \label{ArcExistLemma}
	Suppose that $X/k$ is an integral curve of genus $g$ with at worst planar singularities.  If $I$ is a degree $g-1$ rank 1, torsion-free sheaf on $X$, then there is a $S$-relatively rank 1, torsion-free sheaf $\mathcal{I}$ on $X \times S$ with the following properties:
	\begin{enumerate}

		\item $\mathcal{I}|_{X \times \{0\}}$ is isomorphic to $I$;
		
		\item $H^{0}(X \times S, \mathcal{I})=0$;
		
		\item $\operatorname{dim}_{k}(H^{1}(X \times S, \mathcal{I})) = h^{1}(X,I)$;
	
		\item As a deformation of $I$, the family $\mathcal{I}$ is locally trivial (Definition \ref{dfntriv}).
	\end{enumerate}
\end{lemma}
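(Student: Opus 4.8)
The plan is to build $\mathcal{I}$ as an explicit twist of the constant family $q^{*}I$ by a line bundle on $X\times S$ that deforms $\mathscr{O}_X$ in the Jacobian direction, and then to arrange the twist so that no section of $I$ survives even to first order. Set $r=h^{0}(X,I)=h^{1}(X,I)$ (Riemann--Roch); if $r=0$ then $H^{1}(X,I)=0$ and the constant family $\mathcal{I}=q^{*}I$ already satisfies (1)--(4), so assume $r\ge 1$. Choose a reduced divisor $D=p_{1}+\dots+p_{r}$ of $r$ general points of the smooth locus $X^{\mathrm{sm}}$, and let $\Gamma'\subset X^{\mathrm{sm}}\times S$ be a relative effective divisor, flat over $S$, with $\Gamma'|_{t=0}=D$ and with each $p_{i}$ moving in a general direction; concretely $\Gamma'=\sum_{i}\operatorname{graph}(\gamma_{i})$ for general arcs $\gamma_{i}$ with $\gamma_{i}(0)=p_{i}$, which land in $X^{\mathrm{sm}}$ since a map from the local scheme $S$ factors through an open set as soon as its closed point does. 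Define
\[
\mathcal{I}:=q^{*}I\otimes\mathscr{O}_{X\times S}(\Gamma'-D\times S).
\]
Because $\Gamma'$ and $D\times S$ agree over $t=0$, the special fiber is $\mathcal{I}|_{X\times\{0\}}\cong I$, giving (1).

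For (4), observe that $\mathscr{O}_{X\times S}(\Gamma'-D\times S)$ is a line bundle, canonically trivial away from $\Gamma'\cup(D\times S)\subset X^{\mathrm{sm}}\times S$; in particular it is trivial on a neighborhood of $(\operatorname{Sing}X)\times S$, where therefore $\mathcal{I}\cong q^{*}I$. Over an affine open $U\subset X^{\mathrm{sm}}$ the restriction is a line bundle on $U\times S$, and every such line bundle is pulled back from $U$ since $\operatorname{Pic}(U\times S)\to\operatorname{Pic}(U)$ is an isomorphism (at each infinitesimal stage the obstruction to lifting a trivialization lies in $H^{1}(U,\mathscr{O}_{U})=0$, and one passes to the limit). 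Hence $\mathcal{I}$ is locally trivial in the sense of Definition \ref{dfntriv}.

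The substance of (2) and (3) is that the arc $S\to\bar{J}^{g-1}_{X/k}$ determined by $\mathcal{I}$ meets $\Theta$ with multiplicity exactly $r$ at $t=0$. Restricting the universal two--term complex to the arc as in the proof of Proposition \ref{ThetaFuncProp} (and generalizing the computation of \cite{yano},\cite{friedman}) produces a square matrix $\mathrm{d}_{S}$ over $k[[t]]$ with $\ker\mathrm{d}_{S}=H^{0}(X\times S,\mathcal{I})$, $\operatorname{coker}\mathrm{d}_{S}=H^{1}(X\times S,\mathcal{I})$, and $\det\mathrm{d}_{S}$ a local equation for $\Theta|_{S}$. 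Writing $\mathrm{d}_{S}=\mathrm{d}_{0}+t\,\mathrm{d}_{1}+\cdots$, the matrix $\mathrm{d}_{0}$ has corank $r$ with kernel $H^{0}(X,I)$ and cokernel $H^{1}(X,I)$, and a Schur complement computation shows $\det\mathrm{d}_{S}$ vanishes to order exactly $r$ if and only if the induced map $\bar{\mathrm{d}}_{1}\colon H^{0}(X,I)\to H^{1}(X,I)$ is an isomorphism. By deformation theory $\bar{\mathrm{d}}_{1}$ is cup product with the Kodaira--Spencer class $\kappa$ of $\mathcal{I}$; since $\mathcal{I}$ is a line--bundle twist, $\kappa$ lies in $H^{1}(X,\mathscr{O}_{X})\subset H^{1}(X,\Endsheaf(I))$ and, by construction, equals $\sum_{i}c_{i}\,\mathrm{ev}_{p_{i}}$ with every $c_{i}\ne 0$ (moving $p_{i}$ in a nonzero direction yields a nonzero multiple of the functional $\mathrm{ev}_{p_{i}}\in H^{0}(X,\omega)^{\vee}=H^{1}(X,\mathscr{O}_{X})$, using $g\ge 1$). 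Granting that $\bar{\mathrm{d}}_{1}$ is an isomorphism (established below), $\mathrm{d}_{S}$ is injective, so $H^{0}(X\times S,\mathcal{I})=0$, giving (2), while $\dim_{k}H^{1}(X\times S,\mathcal{I})=\operatorname{length}_{k[[t]]}\operatorname{coker}\mathrm{d}_{S}=r=h^{1}(X,I)$, giving (3).

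The crux, which I expect to be the main obstacle, is showing that this specific $\kappa$ makes $\bar{\mathrm{d}}_{1}$ an isomorphism, and it is exactly here that both halves of Lemma \ref{Lemma: KillSections} are used. Cup product with $\mathrm{ev}_{p_{i}}$ sends $s$ to $\delta_{p_{i}}(s(p_{i}))$, where $\delta_{p_{i}}\colon I|_{p_{i}}\to H^{1}(X,I)$ is the connecting map of $0\to I\to I(p_{i})\to I|_{p_{i}}\to 0$, so $\bar{\mathrm{d}}_{1}$ factors as
\[
H^{0}(X,I)\xrightarrow{\ \mathrm{ev}\ }\bigoplus_{i=1}^{r}I|_{p_{i}}\xrightarrow{\ (c_{i}\delta_{p_{i}})_{i}\ }H^{1}(X,I).
\]
The $H^{0}$-part of Lemma \ref{Lemma: KillSections}, with $d=r$, gives $h^{0}(X,I(-D))=0$, so $\mathrm{ev}$ is injective, hence an isomorphism of $r$-dimensional spaces. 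The $H^{1}$-part, with $d=r$, gives $h^{1}(X,I(D))=0$, whence $h^{0}(X,I(D))=r=h^{0}(X,I)$ by Riemann--Roch; feeding this into the long exact sequence of $0\to I\to I(D)\to I|_{D}\to 0$ forces the connecting map $\bigoplus_{i}I|_{p_{i}}\to H^{1}(X,I)$ to be an isomorphism as well. Composing these two isomorphisms (the nonzero scalars $c_{i}$ are harmless) shows $\bar{\mathrm{d}}_{1}$ is an isomorphism, completing the argument. The remaining bookkeeping, namely choosing $D$ general for both halves of the Lemma simultaneously and away from the non--locally--free locus, and identifying $\bar{\mathrm{d}}_{1}$ with the cup--product map as in \cite{yano}, is routine.
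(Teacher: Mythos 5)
Your construction and the cohomological core of your argument coincide with the paper's: you twist the constant family $q^{*}I$ by $\mathscr{O}(\Gamma'-D\times S)$ with $D$ chosen via Lemma \ref{Lemma: KillSections} so that $h^{0}(X,I(-D))=0$ and $h^{1}(X,I(D))=0$, and you reduce (2) and (3) to the non-degeneracy of the cup product with the Kodaira--Spencer class, proved by factoring it as evaluation along $D$ followed by the coboundary of $0\to I\to I(D)\to I|_{D}(D)\to 0$. This is exactly the paper's mechanism (there phrased as injectivity of $\partial_{1}$, with conclusions (2) and (3) extracted by an induction over the truncations $\operatorname{Spec}k[t]/(t^{n+1})$ rather than your determinant/Schur-complement bookkeeping; the two packagings are equivalent). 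That part of your proposal is correct.

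The genuine gap is in your proof of (4). The claim that $\operatorname{Pic}(U\times S)\to\operatorname{Pic}(U)$ is an isomorphism for $U\subset X^{\mathrm{sm}}$ affine is false, and the failure is precisely at ``one passes to the limit'': $\Gamma(U,\mathscr{O}_{U})\otimes_{k}k[[t]]$ is not $t$-adically complete, so triviality over every $U\times\operatorname{Spec}k[t]/(t^{n+1})$ does not imply triviality over $U\times S$. Your own line bundle is a counterexample: for a non-constant arc $\gamma_{i}$, the graph $\Gamma_{i}$ meets $U\times S$ for \emph{every} nonempty open $U\subset X$ (its generic point lies over the generic point of $X$), and if $\mathscr{O}(\Gamma'-D\times S)|_{U\times S}$ were pulled back from $U$, then restricting to the generic fibre $X_{k((t))}$ --- where every class supported over $X\setminus U$ is a constant class --- and using rigidity of maps from the reduced scheme $S$ to the separated scheme $\operatorname{Pic}^{0}_{X/k}$, one would conclude that the arc $t\mapsto[\mathscr{O}(\sum_{i}\gamma_{i}(t)-\sum_{i}p_{i})]$ is constant, i.e.\ $\kappa=0$. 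Indeed, when $I$ is a line bundle this shows that local triviality over $S$ in the literal sense of Definition \ref{dfntriv} (a cover by product opens $U_{\alpha}\times S$) would force $\mathcal{I}\approx q^{*}I$ and hence $H^{0}(X\times S,\mathcal{I})\cong H^{0}(X,I)\otimes k[[t]]\neq 0$, contradicting your (2); so no proof of the literal statement can succeed. What is actually needed downstream --- in the proof of Theorem \ref{teoC}, via the last sentence of Proposition \ref{locstrthm} --- is that every infinitesimal truncation $\mathcal{I}|_{X\times\operatorname{Spec}k[t]/(t^{n+1})}$ is a locally trivial deformation, equivalently that $\mathcal{I}$ is pulled back on a neighborhood of $\{p\}\times S$ for each $p\in\operatorname{Sing}X$. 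This your argument does deliver: over an Artinian base the graphs are topologically concentrated at the points $p_{i}\in X^{\mathrm{sm}}$, so each truncation is trivial off $\{p_{1},\dots,p_{r}\}$, and your $H^{1}(U,\mathscr{O}_{U})=0$ step is valid at each finite level where no limit is taken; alternatively your first observation --- that $\mathcal{I}\cong q^{*}I$ on the open neighborhood $(X\times S)\setminus(\Gamma'\cup(D\times S))$ of $(\operatorname{Sing}X)\times S$, a neighborhood not of product form --- is exactly the statement that gets used. (The paper itself records (4) with a bare assertion, and its Definition \ref{dfntriv} must be read in this infinitesimal sense for the lemma to be consistent.) So: delete the Picard-group claim and restate (4) infinitesimally; as written, that step of your proof is wrong.
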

\begin{proof}
	The construction in  \cite[\S1.1]{yano} by Friedman and the first author applies to our situation without modification.  The proof of the lemma proceeds exactly as in  \cite[Theorem 1.9]{yano}, but  let us recall the construction and proof here for completeness.

	The desired deformation $\mathcal I$ is constructed from an auxiliary family of Cartier divisors.   We first construct a single Cartier divisor and then fit that divisor into a family over $S$.  By Lemma~\ref{Lemma: KillSections}, we may find a divisor $D$ that consists of $h^{0}(X, I)$ distinct points that lie in the non-singular locus of $X$ and satisfies the equations $H^{0}(X, I(-D))=0$, $H^{0}(X, I) = H^{0}(X, I(D))$.  Fix one such divisor $D$.
	
	The first order deformations of $D$ are classified by the cohomology group $H^{0}(X, \mathscr{O}_{D}(D))$.  Pick a first order deformation $D_1$ with the property that the corresponding section $\tau \in H^{0}(X, \mathscr{O}_{D}(D))$ does not vanish at any point in the support of $D$.  As the divisor is supported on the non-singular locus of $X$, we may extend $D_1$ to a deformation over $S$ (i.e. a $S$-relative Cartier divisor with central fiber equal to $D$).  Fix one such deformation $\mathcal{D}$.
	
	 Let $\mathcal{D}'$ denote the constant $S$-relative Cartier divisor with fiber $D$ and $\mathcal{I}'$ denote
the trivial $S$-relative rank 1, torsion-free sheaf with fiber $I$.  As in the proof of \cite[Theorem 1.9]{yano} we will show that the sheaf $\mathcal{I}$ given by 
\begin{displaymath}
	\mathcal{I} := \mathcal{I}'(\mathcal{D}-\mathcal{D}')
\end{displaymath}
	satisfies the desired conditions of the lemma.  Note that $\mathcal I$  is certainly a locally trivial deformation of $I$, so we need only show that it satisfies the cohomological conditions (2) and (3).

		Let us fix some notation.  Write $S_ n$ for $\operatorname{Spec}k[[t]]/(t^{n+1})$ and $X_n$ and $I_n$ for the obvious restrictions over $S_n$.  By virtue of the flatness of $\mathcal I$, we have exact sequences
	\begin{equation} \label{Eqn: SeqOnS}
		\begin{CD}
			0 	@>>>	\mathcal{I}	@>\cdot t^{n+1}>>	\mathcal{I}	@>>>	I_n	@>>>	0
		\end{CD}
	\end{equation}
	and
	\begin{equation} \label{Eqn: SeqOnSn}
		\begin{CD}
			0 	@>>>	I_{n-1}	@>\cdot t>>	I_n	@>>>	I_0	@>>>	0
		\end{CD}
	\end{equation}
	for all integers $n \ge 1$.  The right-hand map in \eqref{Eqn: SeqOnS} (resp. \eqref{Eqn: SeqOnSn}) is the map given by restriction to $S_n$ (resp. $S_0$), while the left-hand map is the inclusion of those local sections divisible by $t^{n+1}$ (resp. $t$).  We will write $\partial_n$ for the 
	connecting map associated to sequence \eqref{Eqn: SeqOnSn}.

We now claim that the restriction map 
\begin{equation}\label{eqnclaimcoh}
H^{0}(X_1, I_1) \to H^{0}(X,I)
\end{equation}
 is zero. 
The proof proceeds exactly as in \cite[Theorem 1.9]{yano}.
 Indeed, an inspection of the long exact sequence associated to \eqref{Eqn: SeqOnSn} (for $n=1$) shows that the  map $H^{0}(X_1, I_1) \to H^{0}(X,I)$ being zero is equivalent to the map $\partial_1 \colon H^{0}(X,I) \to H^{1}(X,I)$ being injective.  The map $\partial_1$ is given by the cup-product $\sigma \mapsto \partial_{D}(\tau \cup \sigma)$ (see for example \cite[Lemma 1.8]{yano}).  Here $\partial_{D}$ is the connecting map associated to the sequence $$0 \to I \to I(D) \to I|_{D}(D) \to 0.$$  As $H^{0}(X,I) = H^{0}(X,I(D))$, an inspection of the relevant long exact sequence allows us to conclude that $\partial_D$ is injective.  Similarly, the kernel of $\sigma \mapsto \tau \cup \sigma$ is $H^{0}(X, I(-D)) = 0$.  We have now proven that $\partial_1$ is injective, and hence that the map $H^{0}(X_1, I_1) \to H^{0}(X,I)$ is zero, establishing the claim.  
	
Statements (2) and (3) are a consequence of \eqref{eqnclaimcoh} being the zero map.  To establish (2), observe that the restriction maps
$H^{0}(X \times S, \mathcal{I}) \to H^{0}(X_0, I_0)$	
		 factor as 
$$
	 	H^{0}(X \times S, \mathcal{I}) \to H^{0}(X_1, I_1) \to H^{0}(X_0, I_0), $$
	and hence must also be zero.  By examining the long exact sequence associated to \eqref{Eqn: SeqOnS} (with $n=0$), we can conclude that $t \cdot H^{0}(X \times S, \mathcal{I}) = H^{0}(X \times S, \mathcal{I})$.  This is only possible if $H^{0}(X \times S, \mathcal{I}) = 0$, establishing (2) of the lemma as desired.
		
	The fact that Statement (3) holds is a consequence of \cite[Lemma 1.5, 1.6]{yano}.  Consider the group $H^{1}(X \times S, \mathcal{I})$.  The vanishing of $H^{0}(X\times S, \mathcal{I})$, together with a standard cohomology argument, implies that $H^{1}(X \times S, \mathcal{I})$ is a torsion $k[[t]]$-module.  Thus, considering the long exact sequence associated to \eqref{Eqn: SeqOnS}, it follows that 
 the co-boundary  map $H^{0}(X_{n}, I_{n}) \to H^{1}(X \times S, \mathcal{I})$ is an isomorphism for $n$ sufficiently large (i.e., we recover \cite[Lemma 1.5]{yano}).  Finally, considering the long exact sequences associated to \eqref{Eqn: SeqOnSn} for $n$ and $n-1$, an easy induction argument shows that the multiplication map $\cdot t : H^{0}(X_{n-1}, I_{n-1}) \to H^{0}(X_{n}, I_{n})$ is an isomorphism for all $n$.  Composing $n$ times, we see that $\cdot t^{n}: H^{0}(X_{0}, I_{0}) \to H^{0}(X_{n}, I_{n})$ is an isomorphism as well.  This proves (3), that $\dim_k H^1(X\times S,\mathcal I)=h^0(X,I)=h^1(X,I)$.
	\end{proof}

The property that an arc corresponds to a locally trivial family of sheaves implies that if $x$ lies in the singular locus of $\bar{J}_{X/k}^{g-1}$, then the  arc maps into the singular locus.  This property will be used in the proof of Theorem \ref{teoC}.

Since the locus of line bundles $J^{g-1}_{X/k} \subset \bar{J}_{X/k}^{g-1}$ is smooth, Theorem \ref{teoC} follows immediately from Propositions \ref{ThetaFuncProp}  and \ref{teotestarc} at all points corresponding to line bundles.  In fact, this statement remains true without any assumptions on the singularities of $X$; this is  a result due to  Kempf \cite{kempf70} using different methods.  In order to prove Theorem \ref{teoC} at points corresponding to sheaves that fail to be locally free, we must further examine the relationship between the multiplicity of a divisor on a singular variety at a point and  test arcs passing through that point.

\section{Multiplicity Theory}\label{MultiplicitySection}

  In order to compute the multiplicity of the theta divisor $\Theta$ at a point, we make use of some basic tools from multiplicity theory. 
   
\subsection{Preliminaries} 
To fix notation, suppose that we are given a locally Noetherian scheme $V/k$ and a point $x$ of $V$.  Set $\mathscr{O}$ equal to the local ring of $V$ at $x$ and $\mathfrak{m}$ equal to the maximal ideal of $\mathscr{O}$.  The {\bf tangent cone} to $V$ at $x$, written $\mathscr{T}_{x}(V)$, is defined to be the spectrum of the graded algebra $\operatorname{Gr}_{x}(V):= \oplus \mathfrak{m}^{n}/\mathfrak{m}^{n+1}$.  Similarly, we define the {\bf tangent space} $T_{x}(V)$ to be the spectrum of the symmetric algebra $\text{Sym}(\mathfrak{m}/\mathfrak{m}^2)$.  
The {\bf multiplicity}  of $V$ at $x$, written $\operatorname{mult}_xV$, is defined to be 
the degree of the tangent cone as a subscheme of the tangent space.

We will also be interested in the notion of the order of vanishing of a Cartier divisor:  if  $f$ is a non-zero element of $\mathscr{O}$, then the {\bf order of vanishing} of $f$, written $\operatorname{ord}(f)$, is the greatest integer $\nu$ such that $f \in \mathfrak{m}^{\nu}$.  The Krull Intersection Theorem implies that $\operatorname{ord}(f)$ is well-defined.  Furthermore, the order of vanishing is unchanged if we pass from $\mathscr{O}$ to the completion $\hat{\mathscr{O}}$.  Setting $\nu = \operatorname{ord}(f)$, the {\bf leading term} of $f$, written $f^{*}$, is defined to be the image of $f$ in  $\mathfrak{m}^{\nu}/\mathfrak{m}^{\nu+1}$.  Here we are considering $f^*$ as an element of the coordinate ring $\operatorname{Gr}_{x}(V)$ of the tangent cone to $\mathscr{O}$.    If $D$ is a Cartier divisor on $V$ that contains $x$, then we define the {\bf order of vanishing} of $D$ at $x$, written $\operatorname{ord}_{x} D$, to be $\operatorname{ord}(f)$ for $f\in \mathscr{O}_{V,x}$ a local equation for $D$. 

The first result that we need relates the multiplicity of a Cartier divisor to the order of vanishing of that divisor.  In short, one might expect that there are two ways for the  divisor to be singular:  either a local equation could vanish to high order or the ambient variety could itself be singular.
A standard result (e.g., \cite[Theorem 14.9]{matsumura}) gives such an inequality:  for a locally Noetherian scheme $V$, an effective Cartier divisor $D$ on $V$, and a point $x$ on $D$, we have
\begin{equation}\label{eqnmat}
		\operatorname{mult}_{x}D \ge    \operatorname{mult}_{x}V \cdot \operatorname{ord}_{x}D.
\end{equation}

In the case where $V$ is non-singular, it is well-known that equality holds in \eqref{eqnmat}, but this is no longer true when $V$ is singular.  The property that equality holds in \eqref{eqnmat} is closely
related to the property that the leading term $f^*$ of a local equation $f$ for $D$ is a non-zero divisor.  We illustrate this with an example.
\begin{example}
	Let $V = \operatorname{Spec}(k[x,y,z]/(x y))$ and $D$ be the divisor defined by $f = y - x^2$.  We have that $\operatorname{ord}_{0}(f)=1$ and $\operatorname{mult}_0(V) =2$.
However, $D$ is isomorphic to $\operatorname{Spec}k[x,z]/(x^3)$, which has multiplicity $\operatorname{mult}_0(D) =3$.  Thus we have a strict inequality $\operatorname{mult}_0 D >  \operatorname{mult}_0 V\cdot \operatorname{ord}_0(D) $.  
Note that the leading term $f^*$ is a zero-divisor.  If we write $X, Y, Z$ for the images of $x, y, z$ in $\mathfrak{m}/\mathfrak{m} ^2$, then the natural map $k[X, Y, Z]/(XY) \to  \operatorname{Gr}_{0}(V)$
	is an isomorphism. Under this isomorphism $f^*$ corresponds to $Y$, which is killed by $X$.  Furthermore, it can be shown that the ideal of $\mathscr{T}_0(D)$ in $\mathscr{T}_{0}(V)$ is 
	$(Y, X^3)$.  Note this ideal is not generated by $f^*$ alone.
\end{example}

\subsection{Relations with test arcs}
  In order to use the results of \S \ref{SubSection:TestArcsAndTheta} to prove further results about the local geometry of the theta divisor, we must explore the relationship between the multiplicity of  a Cartier divisor at a point and the restriction of the divisor to ``test arcs" through that point.   We use the notation from \S \ref{SubSection:TestArcsAndTheta} for test arcs and will assume for the remainder of the section that a given \emph{arc $S \to V$ does not factor through $D$}, so that $D|_{S}$ is a divisor on $S$.

It is not hard to check that if $S \mapsto V$ sends $0$ to $x$ and $V$ is non-singular at $x$, then $
\operatorname{mult}_x D \le \operatorname{mult}_0D|_S$, 
with  $\operatorname{mult}_x D=\operatorname{mult}_0D|_S$ if and only if $T_0S\nsubseteq \mathscr{T}_xD$.
When $V$ is singular at $x$, the situation is more complicated.  The analogous inequality is
\begin{equation}\label{eqnbasicarc}
	 \operatorname{ord}_x D \le \operatorname{mult}_0D|_S;
\end{equation}
however, when $V$ is singular, it is not always true that there exists an arc for which equality holds  (see Example \ref{exacusp}).  In contrast to the situation of \eqref{eqnmat}, the failure of  such arcs to exist
is not explained by the leading term $f^*$ being a zero-divisor: in Example \ref{exacusp} the element $f^*$ is a non-zero divisor and
equality holds in \eqref{eqnmat}.  

In those cases where   equality
holds in \eqref{eqnmat} and there exists an arc $S \to V$ satisfying $\operatorname{ord}_{x} D = \operatorname{mult}_{0} D|_{S} $,   it follows that 
\begin{equation} \label{eqnta}
	\operatorname{mult}_{x} D =\operatorname{mult}_{x} V \cdot \operatorname{mult}_{0} D|_S. 
\end{equation}
From the inequalities \eqref{eqnmat} and \eqref{eqnbasicarc} one might wonder whether there always existed arcs so that \eqref{eqnta} held.  Example \ref{exacusp} shows that this is not the case.
Thus our strategy for proving Theorem \ref{teoC} is to prove that for a special class of varieties there exist  arcs computing $\operatorname{ord}_{x} D$ and equality holds in \eqref{eqnmat}.

\begin{lemma}\label{lemluck}
Let $V/k$ be a scheme, $D$ an effective  Cartier divisor on $V$, and $x$ a point of $D$.  Assume that
we are given an isomorphism
\begin{equation} \label{eqnstmod}
\varphi:\hat {\mathscr O}_{V,x}\to 
	\mathscr{O}_{\text{std}} :=
	\left( \hat{\bigotimes}_{i=1}^{n} k[[u_i,v_i]]/(u_i v_i) \right)  \hat{\otimes} \left( \hat{\bigotimes}_{i=1}^{m} k[[w_i]] \right)
\end{equation} 
for some integers $n$ and $m$.  Here $\mathscr{O}_{\text{std}}$ denotes the ``standard" local model.  

Then: 
\begin{enumerate}
	\item There exists an arc $S \to V$ such that $\operatorname{ord}_{x} D = \operatorname{mult}_{0} D|_{S}$.
	\item Let $Z_{\text{std}}$ denote the closed subscheme of $\operatorname{Spec}(\mathscr{O}_{\text{std}})$
				defined by the ideal $(u_1, \dots, u_n, v_1, \dots, v_n)$ and $Z$ the subscheme
				of $\operatorname{Spec}(\hat{\mathscr{O}}_{V,x})$ that corresponds to $Z_{\text{std}}$ under
				$\phi$.  
If there exists an arc $S \to V$ with the following properties:
		\begin{enumerate}
			\item $\operatorname{ord}_{x} D = \operatorname{mult}_{0} D|_{S}$;
			\item the arc $S \to V$ factors as:
				\begin{equation*}
					\xymatrix{ 
					S \ar@{->}[r]\ar@{-->}[d]& V\\
					Z \ar@{->}[ru]&\\}, 
				\end{equation*}
		\end{enumerate}
		then equality holds in \eqref{eqnmat}.  In particular, there exists an arc $S \to V$ satisfying
			\begin{equation}\label{eqnluck}
				\operatorname{mult}_x D=\operatorname{mult}_xV
				\cdot \operatorname{mult}_0D|_S.
			\end{equation}
\end{enumerate}
In both $(1)$ and $(2)$, the distinguished arcs $S \to V$ are characterized by the property that $\operatorname{mult}_{0} D|_{S}$ is minimal.  

\end{lemma}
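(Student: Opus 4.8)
The plan is to transport everything through $\varphi$ to the explicit ring $\mathscr{O}_{\text{std}}$ and to read off all multiplicity data from its associated graded ring. Write $f \in \hat{\mathscr{O}}_{V,x} \cong \mathscr{O}_{\text{std}} = k[[u_1,v_1,\dots,u_n,v_n,w_1,\dots,w_m]]/(u_1v_1,\dots,u_nv_n)$ for a local equation of $D$ and set $\nu = \operatorname{ord}_x D = \operatorname{ord}(f)$, recalling that the order and the leading form are unchanged under completion. Since the relations $u_iv_i$ are homogeneous of degree $2$, the tangent cone ring is $A := \operatorname{Gr}_x(V) = k[U_1,V_1,\dots,U_n,V_n,W_1,\dots,W_m]/(U_1V_1,\dots,U_nV_n)$. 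This is reduced, being cut out by a squarefree monomial ideal; its $2^n$ minimal primes $\mathfrak{p}_\epsilon$, indexed by $\epsilon \in \{0,1\}^n$, are obtained by selecting one of $U_i,V_i$ for each $i$, and the components $B_\epsilon := A/\mathfrak{p}_\epsilon$ are polynomial rings in the $n+m$ surviving variables; in particular $\operatorname{mult}_x V = \deg A = 2^n$, recovering Corollary \ref{CorMultJac}. For an arc $\gamma\colon S \to V$ with $0 \mapsto x$, given by a local homomorphism $\gamma^*\colon \mathscr{O}_{\text{std}} \to k[[t]]$, one has $\operatorname{mult}_0 D|_S = \operatorname{ord}_t(\gamma^* f)$, and \eqref{eqnbasicarc} records that this is always $\ge \nu$. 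Finally, I will invoke the standard fact from multiplicity theory that if $f^*$ is a nonzerodivisor in $A$ then equality holds in \eqref{eqnmat}: in that case $\operatorname{Gr}_x(D) \cong A/(f^*)$, and the Hilbert series identity $H_{A/(f^*)} = (1-t^\nu)H_A$ gives $\operatorname{mult}_x D = \nu\cdot\deg A = \operatorname{ord}_x D\cdot\operatorname{mult}_x V$ (cf. \cite{matsumura}).

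For part (1), I would use that $f^*$ is by definition a nonzero element of the reduced ring $A$, so $f^* \notin \mathfrak{p}_\epsilon$ for at least one $\epsilon$. Fixing such an $\epsilon$, the image $\bar f \in B_\epsilon$ of $f^*$ is a nonzero homogeneous polynomial of degree $\nu$ in the surviving variables $\{Y_i\}_i \cup \{W_j\}_j$. A nonzero form does not vanish identically, so I may choose $(\beta,\xi) \in k^n \times k^m$ with $\bar f(\beta,\xi) \ne 0$ and define the linear arc $\gamma$ sending each variable discarded by $\mathfrak{p}_\epsilon$ to $0$, each surviving node-variable $Y_i$ to $\beta_i t$, and $w_j$ to $\xi_j t$. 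This respects each relation $u_iv_i = 0$ (one factor is sent to $0$), hence is a genuine arc into $V$, and expanding $\gamma^* f$ in $t$ shows its lowest term is $\bar f(\beta,\xi)\,t^\nu \ne 0$; thus $\operatorname{mult}_0 D|_S = \nu = \operatorname{ord}_x D$.

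For part (2), suppose $\gamma\colon S \to V$ factors through $Z$ and satisfies $\operatorname{ord}_x D = \operatorname{mult}_0 D|_S$. Factoring through $Z$ means $\gamma^*$ kills every $u_i$ and $v_i$, so $\gamma^* f = f_W(c_1(t),\dots,c_m(t))$, where $f_W \in k[[w_1,\dots,w_m]]$ is the image of $f$ and $c_j(t) = \gamma^*(w_j) \in (t)$. Since $f \in \mathfrak{m}^\nu$ we have $f_W \in (w)^\nu$, and the hypothesis $\operatorname{ord}_t\gamma^* f = \nu$ forces $f_W \notin (w)^{\nu+1}$; equivalently the pure-$W$ part $f^*_W \in k[W_1,\dots,W_m]$ of the leading form (its image under $U_i,V_i \mapsto 0$) is nonzero. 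Now $f^*_W$ is exactly the summand of $f^*$ common to every component, since setting all node-variables to $0$ in $B_\epsilon$ returns $f^*_W$; hence $f^*_W \ne 0$ implies the image of $f^*$ in each $B_\epsilon$ is nonzero, i.e. $f^* \notin \mathfrak{p}_\epsilon$ for all $\epsilon$. As $A$ is reduced its associated primes are the $\mathfrak{p}_\epsilon$, so $f^*$ is a nonzerodivisor and equality holds in \eqref{eqnmat} by the fact recalled above. Because the given arc also satisfies $\operatorname{mult}_0 D|_S = \operatorname{ord}_x D$, substituting into this equality yields \eqref{eqnluck} for that arc.

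The closing characterization is then immediate: by \eqref{eqnbasicarc} every arc obeys $\operatorname{mult}_0 D|_S \ge \operatorname{ord}_x D$, and part (1) produces an arc attaining this bound, so the distinguished arcs of (1) and (2) — those with $\operatorname{ord}_x D = \operatorname{mult}_0 D|_S$ — are precisely the arcs minimizing $\operatorname{mult}_0 D|_S$. The one step demanding genuine care is the multiplicity-theoretic input that a nonzerodivisor leading form yields equality in \eqref{eqnmat}, namely the isomorphism $\operatorname{Gr}_x(D) \cong A/(f^*)$; the remainder is bookkeeping with the explicit coordinates of $\mathscr{O}_{\text{std}}$ and the combinatorics of its $2^n$ components.
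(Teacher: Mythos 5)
Your proof is correct, but it takes a genuinely different route from the paper's. The paper never computes the tangent cone: after reducing to $\mathscr{O}_{\text{std}}$ it passes to the normalization $\tilde{\mathscr{O}}_{\text{std}}$, a product of power series rings with $2^n$ maximal ideals $\mathfrak{n}_1,\dots,\mathfrak{n}_{2^n}$ over $\mathfrak{m}$, and invokes the Extension Theorem (Nagata) to write $\operatorname{mult}_{\mathfrak{m}}(D)=\sum_i\operatorname{mult}_{\mathfrak{n}_i}(\tilde{D})$; since each branch is regular, multiplicity equals order of vanishing there, so part (1) follows by realizing the minimal branch order by an arc through that branch, and part (2) follows because hypothesis (2b) lets the arc be lifted to \emph{every} branch, pinching each $\operatorname{ord}_{\mathfrak{n}_i}(\tilde{D})$ between $\operatorname{ord}_{\mathfrak{m}}(D)$ and $\operatorname{mult}_{0}D|_{S}$. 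You work instead entirely in $\operatorname{Gr}_x(V)$: your $2^n$ minimal primes $\mathfrak{p}_\epsilon$ are the infinitesimal counterparts of the paper's $2^n$ branches, your part (1) is a generic linear arc inside a component avoided by $f^*$, and your part (2) shows that (2a) and (2b) together force $f^*$ to have nonzero image in $k[W_1,\dots,W_m]$, hence to avoid every $\mathfrak{p}_\epsilon$, hence to be a nonzerodivisor, after which equality in \eqref{eqnmat} comes from $\operatorname{Gr}_x(D)\cong\operatorname{Gr}_x(V)/(f^*)$ and a Hilbert series count. What your route buys is a conceptual link that the paper only gestures at in \S\ref{MultiplicitySection}: it exhibits hypothesis (2b) as exactly the condition excluding the zerodivisor pathology of the example following \eqref{eqnmat}, and it makes the appearance of $2^n=\operatorname{mult}_x V$ transparent. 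What the paper's route buys is lighter prerequisites: it needs neither the identification of $\operatorname{Gr}_x(\mathscr{O}_{\text{std}})$ nor the equality case of \eqref{eqnmat}, only regularity of the branches of the normalization. Two steps you treat as routine deserve explicit justification in a final write-up: (i) the isomorphism $\operatorname{Gr}_x(V)\cong k[U_i,V_i,W_j]/(U_1V_1,\dots,U_nV_n)$ holds because $\mathscr{O}_{\text{std}}$ is the completion of this standard graded algebra at its irrelevant ideal (homogeneity of the relations is what makes this work; for non-homogeneous generators the initial ideal can be strictly larger); and (ii) the implication ``$f^*$ a nonzerodivisor implies $\operatorname{Gr}_x(D)\cong\operatorname{Gr}_x(V)/(f^*)$, hence equality in \eqref{eqnmat}'' is not literally the cited result of \cite{matsumura}, which gives only the inequality; it follows from the observation that $(gf)^*=g^*f^*$ for all $g\neq0$ when $f^*$ is a nonzerodivisor, so the ideal of leading forms of $(f)$ is $(f^*)$. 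Finally, your part (1) uses that a nonzero form over $k$ has a nonvanishing $k$-point; this is fine because $k$ is algebraically closed, hence infinite, but say so.
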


\begin{proof} Conceptually, the proof is straightforward.  We first pass from the scheme $V$ to the local ring $\mathscr{O}_{\text{std}}$.  The normalization of this ring is a product of power series ring.  We prove the lemma by computing the multiplicity and order of vanishing of $D$ using the normalization  map.  For clarity, we break the proof into several steps.

\noindent\textbf{Step 1} (Reduction to local algebra):  The multiplicity of $V$ and of $D$ at $x$ is unchanged if we pass from the schemes to their completed local rings.  Furthermore, any arc $S \mapsto V$ with the property that $0 \mapsto x$, factors through a map $S \rightarrow \operatorname{Spec}(\hat{ \mathscr{O}}_{V,x})$.  We can thus immediately reduce to proving the analogous lemma for the local ring $\mathscr{O}_{\text{std}}$.  For the remainder of the proof, we will work with this ring.  Pick an element $f \in \mathscr{O}_{\text{std}}$ that corresponds to a local equation for $D$.

\noindent\textbf{Step 2} (Normalization):
We now pass from $\mathscr{O}_{\text{std}}$ to the normalization of this ring.  Let us recall the standard presentation of this ring.  Let $\tilde{\mathscr{O}}_{\text{std}}$ to be the following ring:

\begin{displaymath}
	( \hat{\bigotimes}_{i=1}^{n} (k[[u_i]] \times k[[v_i]])  \hat{\otimes} ( \hat{\bigotimes}_{i=1}^{m} k[[w_i]] ).
\end{displaymath}

This ring is the normalization of $\mathscr{O}_{\text{std}}$, and we can describe the normalization map.
For each integer $i = 1, \ldots, n$, we have a ring homomorphism
$$
	\phi_{i}: k[[u_i, v_i]]/(u_i v_i) \rightarrow k[[u_{i}]] \times k[[v_{i}]] 
$$
given by the rule 
$$
	u_i \mapsto (u_i,0), v_i \mapsto (0,v_i).
$$

The normalization map  $\phi: \mathscr{O}_{\text{std}} \rightarrow \tilde{\mathscr{O}}_{\text{std}}$ is equal to the product $\phi = \phi_1 \hat{ \otimes } \ldots \hat{ \otimes } \phi_n \hat{ \otimes } \text{id}$.  Observe that $\tilde{\mathscr{O}}_{\text{std}}$ is a semi-local ring that is a product of power series rings and that the pre-image of the maximal idea of $\mathscr{O}_{\text{std}}$ is equal to the $2^n$ maximal ideals of $\tilde{\mathscr{O}}_{\text{std}}$.  Denote these ideals by $\mathfrak{n}_1, \dots, \mathfrak{n}_{2^n}$.  Finally, let us write $\tilde{f}$ for $\phi(f)$ and $\tilde{D}$ for the Cartier divisor on $\operatorname{Spec}(\tilde{\mathscr{O}}_{\text{std}})$ defined by $\tilde{f}$. We will now use the map $\phi$ to prove the lemma.  

\noindent \textbf{Step 3} (Proof of (1)):
    By inspection, we see that $\operatorname{ord}_{\mathfrak{m}} f = \operatorname{min}_{i}(\operatorname{ord}_{\mathfrak{n}_i} \tilde{f} )$.  Without loss of generality, we may assume that  $\operatorname{ord}_{\mathfrak{m}} f = \operatorname{ord}_{\mathfrak{n}_1} \tilde{f}$.  Using the fact that $\tilde{\mathscr{O}}_{\text{std}}$ is a product of power series rings, one may verify immediately that that there exists a  homomorphism $\psi: \tilde{\mathscr{O}}_{\text{std}} \rightarrow k[[t]]$ that maps $\mathfrak{n}_{i}$ into $(t)$ and has the property that $\operatorname{ord}_{\mathfrak{n}_{i}}( \tilde{f}) = \operatorname{ord}_{0}\psi( \tilde{f})$.  The arc that corresponds to the composition $\mathscr{O}_{\text{std}} \rightarrow \tilde{\mathscr{O}}_{\text{std}} \rightarrow k[[t]]$ satisfies the desired condition.

\noindent \textbf{Step 4} (Proof of (2)):
  Assume that there is an arc satisfying (a) and (b), and let $\mathscr{O}_{\text{std}} \to k[[t]]$ denote the corresponding ring map.
 From the Extension Theorem (e.g., see \cite{nagata}), we have that 
\begin{displaymath}
	\operatorname{mult}_{\mathfrak{m}}(D) = \sum_{i=1}^{2^n} \operatorname{mult}_{\mathfrak{n}_i}(\tilde{D}).
\end{displaymath}

On one hand, the ring $\tilde{\mathscr{O}}_{\text{std}}$ is a product of power series rings so the multiplicity of $\tilde{D}$ at any point is equal to the order of vanishing of $\tilde{D}$ at that point.   As $\tilde{D}$ is the pre-image of $D$, we have the bound $\operatorname{ord}_{\mathfrak{n}_i}(\tilde{D})\ge \operatorname{ord}_{\mathfrak{m}}(D)$.

On the other hand, we can use the hypothesis on arcs to produce an upper bound on the numbers $\operatorname{ord}_{\mathfrak{n}_i}(\tilde{D})$.  Let $S \to \operatorname{Spec}(\mathscr{O}_{\text{std}})$ satisfy conditions (a) and (b). Condition (b) ensures that, for $i = 1, \ldots, 2^n$, we can lift $S \to \operatorname{Spec}(\mathscr{O}_{\text{std}})$ to an arc $S \to \operatorname{Spec}(\tilde{\mathscr{O}}_{\text{std}})$ with $0 \mapsto \mathfrak{n}_i$.  The bound $\operatorname{ord}_{\mathfrak{n}_i}(\tilde{D}) \le \operatorname{ord}_{0}(\tilde{D}|_{S}) = \operatorname{ord}_{\mathfrak{m}}(D)$ follows from the existence of such an arc.  This establishes that (2) holds.

\noindent \textbf{Step 5} (Final Remark): Finally, we claim that the distinguished arcs in (1) and (2) are characterized by the property that $\operatorname{ord}_{0} D|_{S}$ is minimal
over all arcs $S \to V$ with $0 \mapsto x$.  This is immediate from \eqref{eqnbasicarc}.
\end{proof}

Lemma \ref{lemluck} is the final result which allows us to prove Theorem \ref{teoC} in the case of nodal curves.  If we modify the assumptions on the local structure of $V$, then the result becomes false.  We provide an example.

\begin{example}\label{exacusp}
	Let $V = \operatorname{Spec}(k[x,y,z]/(y^2-x^3))$ and $D$ be the Cartier divisor on $V$ defined by $f= x-z^{3}$.  For this choice of $V$ and $D$, the leading term
	$f^*$ is a non-zero divisor, and $\operatorname{mult}_0 D =  \operatorname{mult}_0 V\cdot \operatorname{ord}_0 D$.  However, there is no arc 
	$S \to V$ with the property that $\operatorname{ord}_0 D = \operatorname{mult}_0 D|_S$, and so, in particular, there is no arc such that  $\operatorname{mult}_0 D =  \operatorname{mult}_0 V\cdot \operatorname{mult}_0 D|_S$.
	Indeed, the multiplicities and orders are given by $\operatorname{mult}_0 D =2$, $\operatorname{ord}_0 D = 1$, and $\operatorname{mult}_0 V = 2$, while 
	an elementary argument shows that $\operatorname{mult}_0 D|_{S} \ge 2$ for all suitable arcs $S \to V$.
	  Note that the tangent cone of $V$ is also easily computed: there is an isomorphism of $\operatorname{Gr}_0(V)$ with $k[X,Y,Z]/(Y^2)$ such that $f^*=X$.  In particular, $f^*$ is not a zero divisor.	 We point out that the lower bound 
	 $\operatorname{ord}_0 D|_{S} \ge 2$ is sharp; equality holds for the arc defined by $x \mapsto t^2, y \mapsto t^3, z \mapsto 0$.  
\end{example}

\begin{remark}
Example \ref{exacusp} illustrates the difficulties that may arise in using the methods of this paper to establish \eqref{ConjA} for sheaves that fail to be locally free at singularities worse than nodes.
For instance, let $X/k$ be an integral curve of genus $g$ with a unique singular point, which is a cusp, and let $x \in \bar{J}_{X/k}^{d}$ correspond to a sheaf that fails to be locally free at the singular point.  In the notation of Example \ref{exacusp}, the completed local ring of $\bar{J}_{X/k}^{d}$ is isomorphic to a power series ring over the completed local ring of $V$ at $0$  (the proof is  similar to that of Proposition \ref{locstrthm}).
\end{remark}

\section{The proof of Theorem \ref{teoC}}
Now we prove Theorem \ref{teoC}.  We retain the notation for arcs that has been used in the previous two sections.  The idea of the proof is straightforward.  For a point $x$ of $\Theta$ corresponding to a sheaf $I$, Propositions \ref{ThetaFuncProp} and  \ref{teotestarc} ensure the existence of an arc $S$ through $x$ with minimal order of contact $h^1(X,I)$ with $\Theta$.  We then expect that $\operatorname{mult}_x\Theta=\operatorname{mult}_x\bar J^{g-1}_{X/k}\cdot h^1(X,I)$.  In order to check that this is in fact the case for sheaves failing to be locally  free only at nodes, we utilize Lemma \ref{lemluck}.

\begin{proof}[Proof of Theorem  \ref{teoC}]
	Suppose that $x$  is a point of $\Theta$ that corresponds to a rank $1$,  torsion-free sheaf $I$ that fails to be locally free at exactly $n$ points of $X$, all of which are nodes.  We need to compute both $\operatorname{mult}_x\Theta$ and $\operatorname{ord}_{x}\Theta$.  First observe that, by Proposition \ref{locstrthm}, the completion of the local ring of $\bar J^{g-1}_{X/k}$ at $x$ satisfies \eqref{eqnstmod} of  Lemma \ref{lemluck}.  We now aim to construct an arc $S\to \bar J^{g-1}_{X/k}$ satisfying conditions (2a) and (2b) of Lemma \ref{lemluck}.

Proposition \ref{teotestarc} states that there exists such an arc with  $\operatorname{mult}_0\Theta|_S= h^1(X,I)$.  Furthermore, every
arc $S \to \bar{J}^{g-1}_{X/k}$ with $0 \mapsto x$ must satisfy $\operatorname{mult}_0\Theta|_S\ge h^1(X,I)$ (Proposition \ref{ThetaFuncProp}). We can thus construct an arc which satisfies property (2a) of Lemma \ref{lemluck}; recall that such arcs are characterized by the fact that they minimize $\operatorname{mult}_0\Theta|_S$.
Furthermore, it follows from Lemma \ref{ArcExistLemma} (4) that $S \to \bar{J}_{X/k}^{g-1}$ can  be taken to correspond to a locally trivial family of rank $1$, torsion-free sheaves.  In particular, 
due to Proposition \ref{locstrthm}, such an arc satisfies condition (2b) of Lemma \ref{lemluck}.

Thus the conclusion of Lemma \ref{lemluck} holds, implying  that
$$
\operatorname{mult}_x\Theta=\operatorname{mult}_x \bar{J}_{X/k}^{g-1} \cdot \operatorname{mult}_0\Theta|_S=\left(
\operatorname{mult}_x \bar{J}_{X/k}^{g-1}\right)\cdot h^1(X,I),
$$
and $\operatorname{ord}_x\Theta=h^1(X,I)$.
We have already computed in Corollary \ref{CorMultJac} that $\operatorname{mult}_{x}\bar{J}_{X/k}^{g-1}$ is $2^n$, and so the theorem follows immediately.
\end{proof}

\section{Further Cases of Equation  (\ref{ConjA}) } \label{SectionPfA}
Here we prove several results that suggest that Equation (\ref{ConjA}) may  hold in greater generality.   Our proof of Theorem \ref{teoC} provides some inequalities on the order of vanishing and the 
multiplicity of the theta divisor of a curve with arbitrary planar singularities.  Without any assumptions, we can assert that
$\operatorname{mult}_{x}\Theta \ge  \left(\operatorname{mult}_x \bar{J}_{X/k}^{g-1}\right)\cdot \operatorname{ord}_x\Theta$; this is  \eqref{eqnmat}.  We also have an upper bound on $\operatorname{ord}_{x} \Theta$.  Indeed, Proposition \ref{teotestarc} is valid for any locally planar curve, so by \eqref{eqnbasicarc} we have $h^{0}(X,I) \ge \operatorname{ord}_{x} \Theta $.  We 
now discuss some results that can be proven using methods distinct from those used in the proof of Theorem \ref{teoC}.

 To begin, an immediate consequence of \eqref{ConjA} holding in general would be that the singular locus of $\Theta$ would be  equal to the union of the locus of line bundles with at least two linearly independent  global sections and the locus of sheaves that fail to be locally free.  We give a proof of this using results of Kempf \cite{kempf70} and  Kleppe \cite{kleppe81}.

\begin{proposition} \label{teoA}
	Suppose that $X/k$ is an integral curve with at worst planar singularities.  Define the following subsets of the theta divisor:
	\begin{enumerate}
		\item The set $W^1_{g-1}$ consisting of those points of $\Theta$ that correspond to sheaves $I$ with the property that $h^{0}(X,I) \ge 2$;
		\item The set $\partial \Theta$ consisting of those  points of $\Theta$ that correspond to sheaves that fail to be locally free.
	\end{enumerate}
Then we have that:
		\begin{displaymath}
			\Theta_{\operatorname{sing}} =  W^1_{g-1} \cup \partial \Theta,
					\end{displaymath}
and if $X$ is singular, then $\dim \Theta_{\operatorname{sing}}=g-2$.  More precisely, if nonempty, the set $\partial \Theta $ has an irreducible component of dimension $g-2$.
\end{proposition}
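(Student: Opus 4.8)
The plan is to derive the set-theoretic equality from the multiplicity estimates already in hand, and then to pin down the dimension by producing an explicit $(g-2)$-dimensional family of non-locally-free sheaves inside $\partial\Theta$. Throughout I use that $\Theta$ is an integral local complete intersection, hence Cohen--Macaulay, so that a point $x \in \Theta$ is smooth if and only if $\operatorname{mult}_x \Theta = 1$; Kleppe's theorem \cite{kleppe81} identifying the singular locus of $\bar{J}^{g-1}_{X/k}$ with the locus of non-locally-free sheaves; and Kempf's Riemann singularity theorem \cite{kempf70}, valid at every point corresponding to a line bundle, which asserts $\operatorname{mult}_x \Theta = h^0(X,I)$ there.

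For the inclusion $W^1_{g-1}\cup\partial\Theta\subseteq\Theta_{\operatorname{sing}}$, first suppose $x\in\partial\Theta$. Then $I$ is not locally free, so $\operatorname{mult}_x\bar{J}^{g-1}_{X/k}\ge 2$ by Kleppe, while $\operatorname{ord}_x\Theta\ge 1$ since $x\in\Theta$; inequality \eqref{eqnmat} then forces $\operatorname{mult}_x\Theta\ge 2$. If instead $x\in W^1_{g-1}$ and $I$ is locally free, Kempf gives $\operatorname{mult}_x\Theta = h^0(X,I)\ge 2$; and if such an $x$ has $I$ non-locally-free it already lies in $\partial\Theta$. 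Conversely, if $x\in\Theta_{\operatorname{sing}}$ lay outside $W^1_{g-1}\cup\partial\Theta$, then $I$ would be a line bundle with $h^0(X,I)=1$, so $\operatorname{mult}_x\bar{J}^{g-1}_{X/k}=1$ by Kleppe and $\operatorname{mult}_x\Theta = h^0(X,I) = 1$ by Kempf, contradicting $x\in\Theta_{\operatorname{sing}}$. This establishes $\Theta_{\operatorname{sing}}=W^1_{g-1}\cup\partial\Theta$.

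For the dimension, the upper bound $\dim\Theta_{\operatorname{sing}}\le g-2$ is immediate, since $\Theta$ is integral of dimension $g-1$ and $\Theta_{\operatorname{sing}}$ is a proper closed subscheme. To produce a component of dimension exactly $g-2$ I would argue that $\partial\Theta$ itself contains one. Fix a singular point $p$ and a partial normalization $\sigma\colon Y\to X$ with $Y$ integral and $p_{a}(Y)=g-1$, i.e.\ dropping the $\delta$-invariant at $p$ by one. For a line bundle $N$ on $Y$ of degree $p_{a}(Y)-1=g-2$, the pushforward $\sigma_*N$ is a rank $1$, torsion-free sheaf of degree $g-1$ on $X$ that fails to be locally free along $\sigma^{-1}(p)$, and since $\sigma$ is finite one has $h^0(X,\sigma_*N)=h^0(Y,N)$. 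Thus $\sigma_*$ carries the $(g-2)$-dimensional theta divisor $\Theta_Y$ of $\bar{J}^{g-2}_{Y/k}$ into $\partial\Theta$. Because the kernel of $\sigma_*$ on $\operatorname{Pic}^0(Y)$ is finite, the map is generically finite, so the image is a $(g-2)$-dimensional subvariety of $\partial\Theta$; this gives the lower bound and the sharper statement. For $g\ge 2$ the divisor $\Theta_Y$ is nonempty, whence $\partial\Theta\ne\varnothing$; for $g=1$ the assertion $\dim\Theta_{\operatorname{sing}}=-1$ merely says $\Theta_{\operatorname{sing}}=\varnothing$, which is consistent with the set equality.

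The main obstacle is the dimension bookkeeping for singularities worse than nodes. Two points need care: the existence of a partial normalization dropping $p_{a}$ by exactly one (immediate for a node, but for a general planar, hence Gorenstein, singularity one must exhibit an overring of colength one inside the normalization of $\mathscr{O}_{X,p}$), and the fact that $\sigma_*$ does not collapse dimension, i.e.\ that $\ker(\sigma_*)\subseteq\operatorname{Pic}^0(Y)$ is finite rather than positive-dimensional. An alternative that bypasses these issues is to appeal directly to Kleppe's analysis \cite{kleppe81} of the singular locus of $\bar{J}^{g-1}_{X/k}$, which exhibits a component of $\partial$ of dimension $g-1$; intersecting with the ample, irreducible divisor $\Theta$—which cannot contain this component, since the generic point of the component is non-locally-free while that of $\Theta$ is a line bundle—then cuts the dimension down to $g-2$ by Krull's principal ideal theorem and ampleness.
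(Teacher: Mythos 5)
Your argument for the set-theoretic equality $\Theta_{\operatorname{sing}} = W^1_{g-1}\cup\partial\Theta$ is correct and is essentially the paper's own: \eqref{eqnmat} together with Kleppe's description of the singular locus of $\bar{J}^{g-1}_{X/k}$ gives $\partial\Theta\subseteq\Theta_{\operatorname{sing}}$, the line bundle case of Theorem \ref{teoC} (Kempf) gives the rest, and Cohen--Macaulayness of $\Theta$ converts multiplicity one into smoothness. It is on the dimension claim that you diverge, and where the gaps are. The paper stays entirely on $X$: it takes the closure in $\operatorname{Hilb}^{g-1}_{X/k}$ of the locus of subschemes $Z$ consisting of a singular point $p$ together with $g-2$ general smooth points --- visibly $(g-2)$-dimensional and mapping into $\partial\Theta$ under the Abel map $Z\mapsto I_Z\otimes\omega$ --- and then checks that the Abel map is generically injective there, because the fiber is $\mathbb{P}H^1(X,\omega\otimes I_Z)$ and global generation of $\omega$ (Kleiman--Martins) forces $h^0(X,\omega\otimes I_Z)=1$ for general such $Z$.

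Your partial-normalization route can be completed, but as written it has genuine gaps. The two you flag are real and are left unresolved: the existence of a colength-one overring $\mathscr{O}_{X,p}\subsetneq\mathscr{O}'\subseteq\widetilde{\mathscr{O}}_{X,p}$ (true for every non-normal curve singularity, Gorenstein or not, but needing proof), and the injectivity of $N\mapsto\sigma_*N$ (also true: an $\mathscr{O}_X$-isomorphism $\sigma_*N\cong\sigma_*N'$ is multiplication by a rational function, hence an $\mathscr{O}_Y$-isomorphism $N\cong N'$; note that $\sigma_*$ is not a group homomorphism, so ``finite kernel'' is not even the right formulation). More seriously, there is a gap you do not flag: a colength-one partial normalization of a planar singularity need not be planar. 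For $\mathscr{O}_{X,p}=k[[t^3,t^4]]$ (the plane curve singularity $u^4=v^3$), the unique colength-one overring is $k[[t^3,t^4,t^5]]$, which has embedding dimension $3$. So your $Y$ may fail to be planar, and your appeal to ``the $(g-2)$-dimensional theta divisor $\Theta_Y$ of $\bar{J}^{g-2}_{Y/k}$'' invokes exactly the object the paper warns is not under control for non-planar curves (Remark \ref{Remark: NonPlaneExample}). The repair is to avoid $\bar{J}^{g-2}_{Y/k}$ and $\Theta_Y$ entirely and work with line bundles on $Y$: Lemma \ref{Lemma: KillSections} (valid for any integral curve) plus Riemann--Roch gives $h^0(\mathscr{O}_Y(q_1+\cdots+q_{g-2}))=1$ for general smooth points $q_i$, so the locus $\{N\in\operatorname{Pic}^{g-2}(Y):h^0(Y,N)>0\}$ has dimension $g-2$, and by injectivity of $\sigma_*$ so does its image in $\partial\Theta$. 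Finally, your fallback argument is not available as stated: the paper cites Kleppe only for the identification of the singular locus of $\bar{J}^{g-1}_{X/k}$ with the non-locally-free locus, not for the existence of a $(g-1)$-dimensional component of that locus, and this codimension-one claim is itself nontrivial for general planar singularities --- the natural way to prove it is precisely the partial-normalization construction whose difficulties you were trying to bypass.
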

\begin{proof}
	Given an integral curve $X/k$ with at worst planar singularities, we intend to determine $\Theta_{\textnormal{sing}}$.  As we indicated earlier (\S\ref{remkleppekass}), the singular locus of the compactified Jacobian is the locus corresponding to sheaves that fail to be locally free.  Equation (\ref{eqnmat}) establishes the containment $\partial\Theta \subset \Theta_{\text{sing}}$.  We now show  $W^{1}_{g-1} \subset \Theta_{\text{sing}}$.  Let $x$ be a point of $W^{1}_{g-1}$.  If $x\in \partial \Theta$, then there is nothing to show.  Otherwise, $x$ corresponds to a line bundle and Theorem \ref{teoC} applies.

	We must also  show the reverse containment $\Theta_{\text{sing}} \subset W^{1}_{g-1} \cup \partial \Theta$.  Suppose that $x$ does not lie in $W^{1}_{g-1} \cup \partial \Theta $. By definition, $x$ then corresponds to a line bundle $L$ with $h^{1}(X,L)=1$.  A second application of Theorem \ref{teoC} shows that $x$ is not a singularity of $\Theta$.

	The final claim is that $\partial \Theta$ has a component of dimension $g-2$ when $X$ is singular.  We use the Abel map to reduce to the Hilbert scheme.  Fix a singularity $p$ of $X$ and consider the Zariski closure of the subset of points in  $\text{Hilb}^{g-1}_{X/k}$ that correspond to the union of $p$ and $g-2$ distinct smooth points of $X$.  It is immediate that this subset is ($g-2$)-dimensional and maps into $\partial \Theta$ under the Abel map.  To complete the proof, it is enough to show that the fiber over a general point of the image consists of a single point or, equivalently, that $h^{1}(X, \omega \otimes I_{Z})=h^0(X, \omega \otimes I_{Z})=1$ for $Z$ a closed subscheme consisting of the union of $p$ and $g-2$ general smooth points.  The latter statement follows from the known fact (Kleiman and Martins \cite[\S2]{kleiman09}) that the canonical bundle is generated by global sections.
\end{proof}

\begin{remark} \label{operatorname{mult}_0 D|_S}
	The proposition becomes false if we drop the assumption that $X$ has locally planar singularities.  Indeed, say $X$ is the non-Gorenstein
	genus $2$ curve with a unique singularity $p \in X$ analytically equivalent to the space triple point $k[[x,y,z]]/(xy, xz, yz)$.
	If $\omega$ is the dualizing sheaf and $q \in X$ is a point in the non-singular locus, then one can show that $\Theta$ is 
	non-singular at the point corresponding to $\omega(-q)$ (modify the proof of Lemma \ref{ThetaFuncProp}), and the sheaf $\omega(-q)$ fails to 
	be locally free.  The curve $X$ is non-Gorenstein, but we expect that Gorenstein examples exist.  Indeed, when $X$ has non-planar singularities,
	both $\bar{J}_{X/k}^{g-1}$ and $\text{Hilb}_{X/k}^{g-1}$  (and hence $\Theta$) are reducible by \cite{kleppe81a}; the analogue of Proposition \ref{teoA} thus fails
	provided some non-smoothable component of $\text{Hilb}^{g-1}_{X/k}$ is generically non-reduced and maps birationally onto its
	image under the Abel map.
\end{remark}

Some cases of Equation (\ref{ConjA}) can be established by using the Abel map to reduce to a statement about the Hilbert scheme.  The precise result that we prove is as follows:

\begin{proposition}\label{AbelEvidenceProp}
	Suppose that $X/k$ is an integral curve with at worst planar singularities.  Let $x$ be a point of the theta divisor $\Theta$ that corresponds to a sheaf $I$ satisfying $h^{0}(X,I)=1$.  Then we have that:
	\begin{displaymath}
		\operatorname{mult}_{x}\Theta = \operatorname{mult}_{x}\bar{J}_{X/k}^{g-1}
	\  \	\textnormal{ and} \ \ 
		\operatorname{ord}_x\Theta=1.
	\end{displaymath}
\end{proposition}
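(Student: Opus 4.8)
The plan is to treat the two assertions separately, since the equality $\operatorname{ord}_x\Theta=1$ is elementary and already forces half of the multiplicity statement. Because $x$ lies on the Cartier divisor $\Theta$, a local equation $f$ belongs to the maximal ideal and so $\operatorname{ord}_x\Theta\ge 1$. For the opposite bound, Proposition \ref{teotestarc} furnishes an arc $S\to\bar{J}_{X/k}^{g-1}$ through $x$ with $\operatorname{mult}_0\Theta|_S=h^1(X,I)$, and by Riemann--Roch in degree $g-1$ we have $h^1(X,I)=h^0(X,I)=1$; substituting into \eqref{eqnbasicarc} gives $\operatorname{ord}_x\Theta\le 1$, hence $\operatorname{ord}_x\Theta=1$. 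Feeding $\operatorname{ord}_x\Theta=1$ into the general inequality \eqref{eqnmat} yields $\operatorname{mult}_x\Theta\ge\operatorname{mult}_x\bar{J}_{X/k}^{g-1}$, so the entire proposition reduces to the reverse inequality $\operatorname{mult}_x\Theta\le\operatorname{mult}_x\bar{J}_{X/k}^{g-1}$.

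To obtain the reverse inequality I would transport the problem to a Hilbert scheme of points, where the hypothesis $h^0(X,I)=1$ manifests as a corank-one degeneracy. Fix a general effective divisor $D$ of degree $e>g-1$ supported on the smooth locus of $X$. Tensoring by $\mathscr{O}(-D)$ is an isomorphism $\bar{J}_{X/k}^{g-1}\xrightarrow{\sim}\bar{J}_{X/k}^{g-1-e}$ sending $x$ to the point $y$ of $I(-D)$ and sending $\Theta$ to the translated divisor $\Theta'=\{J: h^0(X,J\otimes\mathscr{O}(D))\ne 0\}$; in particular $\operatorname{mult}_x\Theta=\operatorname{mult}_y\Theta'$ and $\operatorname{ord}_x\Theta=\operatorname{ord}_y\Theta'$. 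Since $X$ is Gorenstein and $d':=g-1+e>2g-2$, the Abel map $A\colon\operatorname{Hilb}_{X/k}^{d'}\to\bar{J}_{X/k}^{g-1-e}$ is a smooth fibration (\cite[Theorem 8.4]{altman80}). Smooth morphisms preserve order of vanishing and multiplicity along pullbacks, so writing $W=Z+D$ for the length-$d'$ subscheme over $y$, where $\mathcal{I}_Z\otimes\omega=I$, I would get $\operatorname{mult}_y\Theta'=\operatorname{mult}_W A^{-1}(\Theta')$ as well as $\operatorname{mult}_W\operatorname{Hilb}_{X/k}^{d'}=\operatorname{mult}_y\bar{J}_{X/k}^{g-1-e}=\operatorname{mult}_x\bar{J}_{X/k}^{g-1}$.

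Now $A^{-1}(\Theta')$ is exactly the locus of length-$d'$ subschemes failing to impose independent conditions on the complete linear system $|\omega(D)|$, which has projective dimension $d'-1$; it is therefore the degeneracy locus of the evaluation map $ev\colon H^0(X,\omega(D))\otimes\mathscr{O}_{\operatorname{Hilb}}\to\mathscr{E}$ between two bundles of rank $d'$, where $\mathscr{E}$ is the tautological bundle with fiber $H^0(X,\omega(D)\otimes\mathscr{O}_{W'})$ at $W'$. At $W$ the kernel of $ev$ is $H^0(X,\omega(D)\otimes\mathcal{I}_W)=H^0(X,\omega\otimes\mathcal{I}_Z)=H^0(X,I)$, which is one-dimensional, so $ev$ has corank exactly one there. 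Row and column operations over the local ring reduce $\det(ev)$ to a single function $m$, and the equality $\operatorname{ord}_W A^{-1}(\Theta')=\operatorname{ord}_x\Theta=1$ established above shows that $\operatorname{ord}_W m=1$, i.e. $m$ has a nonzero linear leading term $m^*$. By the discussion surrounding \eqref{eqnmat}, the desired reverse inequality now reduces to showing that $m^*$ is a non-zero-divisor in the tangent cone $\operatorname{Gr}_W(\operatorname{Hilb}_{X/k}^{d'})$; since this cone is $\operatorname{Gr}_x(\bar{J}_{X/k}^{g-1})$ tensored with a polynomial ring in the smooth fiber directions, this is equivalent to asking that the leading term of a local equation for $\Theta$ be a non-zero-divisor in $\operatorname{Gr}_x(\bar{J}_{X/k}^{g-1})$.

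I expect this last point to be the genuine obstacle. When the ambient Hilbert scheme is smooth---the classical setting of Kempf and Fulton---every nonzero linear form is automatically a non-zero-divisor, so the computation is immediate; this is precisely why the corank-one condition $h^0(X,I)=1$ is essential, as it degenerates $\det(ev)$ to a single order-one function rather than to a higher-order determinant. In the present singular setting the tangent cone is reducible (for instance it has $2^n$ components in the nodal case of Theorem \ref{teoC}), and one must rule out the possibility that $m^*$ vanishes identically on some component---exactly the phenomenon responsible for the strict inequality in the Example following \eqref{eqnmat}. The locally trivial test arc of Proposition \ref{teotestarc} already certifies that $m^*$ is nonzero along one branch, and I would attempt to complete the argument by producing, through deformations of the points of $W$ inside $\operatorname{Hilb}_{X/k}^{d'}$, enough arcs meeting each component of the tangent cone so that $m^*$ avoids every associated prime. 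Gaining sufficient control of these components for an arbitrary planar singularity is the step I expect to require the most care, and it is precisely the difficulty that obstructs extending \eqref{ConjA} beyond the corank-one case.
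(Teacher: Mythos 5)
Your first paragraph is fine: $\operatorname{ord}_x\Theta=1$ follows from Proposition \ref{teotestarc}, \eqref{eqnbasicarc}, and Riemann--Roch, and \eqref{eqnmat} then reduces everything to the reverse inequality $\operatorname{mult}_x\Theta\le\operatorname{mult}_x\bar{J}^{g-1}_{X/k}$. The transport of the problem to $\operatorname{Hilb}^{d'}_{X/k}$ via twisting and the smooth Abel fibration, and the identification of $A^{-1}(\Theta')$ with the corank-one degeneracy locus of $ev$, are also sound. But your argument stops exactly at the decisive point: you must show that the degree-one leading form $m^*$ is a non-zero-divisor in $\operatorname{Gr}_W(\operatorname{Hilb}^{d'}_{X/k})$, and you offer only the hope of producing enough arcs. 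This is a genuine gap, not a technicality. The paper's example following \eqref{eqnmat} (namely $V=\operatorname{Spec}(k[x,y,z]/(xy))$ with $f=y-x^2$) is precisely an order-one equation on the standard local model of the compactified Jacobian at a nodal point whose multiplicity jumps; so order one alone never yields the reverse inequality, and an order-one arc along one branch controls only the component containing its tangent direction, not the other components, let alone possible embedded primes. For arbitrary planar singularities the tangent cone of $\operatorname{Hilb}^{d'}_{X/k}$ (equivalently of $\bar{J}^{g-1}_{X/k}$) is exactly the object nobody understands---that is why \eqref{ConjA} is open---so your plan re-imports the difficulty that the proposition is supposed to circumvent.

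The paper avoids tangent cones entirely by exploiting $h^0(X,I)=1$ much earlier. Since the fiber of the degree-$(g-1)$ Abel map over a point $[J]$ is $\mathbb{P}H^{1}(X,J)$, semicontinuity gives an open neighborhood $V\subset\Theta$ of $x$ over which every scheme-theoretic fiber is a single reduced point; $A\colon A^{-1}(V)\to V$ is then a proper morphism of varieties of the same dimension with single reduced-point fibers, hence an isomorphism. Thus $\operatorname{mult}_x\Theta=\operatorname{mult}_z\operatorname{Hilb}^{g-1}_{X/k}$, where $z=[Z]$ and $I=\mathcal{I}_Z\otimes\omega$, with no determinant or leading-form analysis whatsoever. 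The remaining equality $\operatorname{mult}_z\operatorname{Hilb}^{g-1}_{X/k}=\operatorname{mult}_x\bar{J}^{g-1}_{X/k}$ is obtained by adding $g$ general smooth points to $Z$: the completed local ring of $\operatorname{Hilb}^{2g-1}_{X/k}$ at the enlarged subscheme decomposes as the completed tensor product of that of $\operatorname{Hilb}^{g-1}_{X/k}$ at $z$ with $g$ power-series variables, the degree-$(2g-1)$ Abel map is a smooth fibration by \cite[Theorem 8.4]{altman80}, and translation by $\omega(p_1+\dots+p_g)$ carries the image point back to $x$. If you want to salvage your approach, replace the degeneracy-locus analysis on $\operatorname{Hilb}^{d'}_{X/k}$ by this local isomorphism on $\operatorname{Hilb}^{g-1}_{X/k}$: that is exactly the step where the hypothesis $h^0(X,I)=1$ does work for you that corank one alone cannot.
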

\begin{proof}
	Let $X/k$, $I$, and $x$ be given as in the hypothesis.  It is enough to prove that $\operatorname{mult}_x\Theta=\operatorname{mult}_x\bar{J}_{X/k}^{g-1}$.  Let $Z$ be the unique closed subscheme of $X$ such that $I = I_{Z} \otimes \omega$ and say that $z$ is the point of $\text{Hilb}_{X/k}^{g-1}$ that corresponds to $Z$.  By semi-continuity, there is a Zariski open neighborhood $V$ of $x$ in $\Theta$ such that, for all points $x_1$ in $V$, the corresponding sheaf $I_1$ satisfies $h^{0}(X,I_1)=1$.  Now consider the restriction of the modified Abel map $A: A^{-1}(V) \rightarrow V$.  It follows from \cite[\S4]{altman80} that the scheme-theoretic fibers of this map all consist of a single reduced point.  As $A: A^{-1}(V) \rightarrow V$ is a proper map of varieties of the same dimension, this map must be an isomorphism. 
	
	We have now reduced the proof to the problem of showing that the formula $\text{mult}_{z}(\text{Hilb}^{g-1}_{X/k}) = \text{mult}_{x}\bar{J}_{X/k}^{g-1}$ holds.  This equality is established by relating $\text{Hilb}_{X/k}^{g-1}$ to a Hilbert scheme of higher degree, which is easier to work with since the Abel map is then a smooth fibration.
	
	Pick a collection of distinct points $p_1, \ldots, p_g$ such that $h^{0}(X, \mathscr{O}_{X}(p_1 + \ldots + p_{g}))=1$ and the points are disjoint from both the singular locus of $X$ and the support of $Z$.  Set $Z_1$ equal to the closed subscheme defined by the ideal $I_{Z} \cap  I_{p_1} \cap \ldots \cap I_{p_g}$ and $z_1$ equal to the corresponding point of $\text{Hilb}_{X/k}^{2 g - 1}$.  Elementary deformation-theoretic considerations  show that there are compatible decompositions: 

$$
		T_{z_1}(\text{Hilb}^{2g-1}_{X/k}) = T_{z}(\text{Hilb}^{g-1}_{X/k}) \times \mathbb{A}^{g}, \\
$$
$$
		\mathscr{T}_{z_1}
		(\text{Hilb}^{2g-1}_{X/k}) = \mathscr{T}_{z}(\text{Hilb}^{g-1}_{X/k}) \times \mathbb{A}^{g}.$$

In fact, the decomposition exists on the level of completed local rings or, equivalently, the functors those rings pro-represent.  Briefly, the completed local ring of $\text{Hilb}^{2g-1}_{X}$ at $z_1$ pro-represents the functor that parametrizes infinitesimal deformations of the quotient map $\mathscr{O}_{X} \to \mathscr{O}_{Z_1}$.  The algebra $\mathscr{O}_{Z_1}$ decomposes as the product $\mathscr{O}_{Z} \times \mathscr{O}_{p_1} \times \dots \times \mathscr{O}_{p_g}$; infinitesimal deformations
of a quotient map into a product are in natural bijection with the products of infinitesimal deformations of the quotient maps into  the components, so the decomposition of $\mathscr{O}_Z$ induces
a decomposition of the completed local ring of $\text{Hilb}_{X/k}^{2 g -1}$.  The desired result now follows from the observation that infinitesimal deformations of $\mathscr{O}_{X} \to \mathscr{O}_{p_i}$ are parameterized by $k[[t]]$.

Given the decomposition, we obtain an equality of multiplicities: \[ \text{mult}_{z}(\text{Hilb}_{X/k}^{g-1}) = \text{mult}_{z_1}(\text{Hilb}_{X/k}^{2g-1}). \] Now the  Abel map $A: \text{Hilb}_{X/k}^{2g-1} \rightarrow \bar{J}_{X/k}^{-1}$ is a smooth fibration and so, setting $x_1 = A(z_1)$, we have a further equality of multiplicities:
\begin{displaymath}
	\text{mult}_{z_1}(\text{Hilb}_{X/k}^{2g-1}) = \text{mult}_{x_1}(\bar{J}_{X/k}^{-1}).
\end{displaymath}
Finally, the map given by translation by the line bundle $\omega(p_1+\ldots+p_g)$ defines an isomorphism $\bar{J}_{X/k}^{-1} \rightarrow \bar{J}_{X/k}^{g-1}$  sending $x_1$ to $x$.  In particular, we can conclude that $\text{mult}_{x_{1}}(\bar{J}_{X/k}^{-1}) = \text{mult}_{x}(\bar{J}^{g-1}_{X/k})$, and the proof is complete.
\end{proof}

\begin{remark}
	Using degeneration techniques it is possible  to prove that the equation $\operatorname{ord}_{x} \Theta  = h^{0}(X,I)$ holds 
in some cases not covered by Proposition \ref{AbelEvidenceProp} and Theorem \ref{teoC}.  Given a flat
family of curves $\mathscr{X}/S$ over $S = \operatorname{Spec}(k[[t]])$ such that the (geometric) generic fiber is non-singular and the special fiber $X$ has planar singularities,  
the compactified Jacobians of the fibers of this family  fit together to form a family $\bar{J}_{\mathscr{X}/S}$, and the appropriate theta divisors fit into
a divisor $\Theta_{S}$ in $\bar{J}_{\mathscr{X}/S}^{g-1}$. Let $I$ be a sheaf on $X$ corresponding to a point of $\Theta$.  If it is possible to fit $I$ into a family $\mathcal{I}$ of sheaves on $\mathscr{X}/S$ such that the dimensions of the cohomology groups are constant as a function of $s \in S$, then the semi-continuity of the order of vanishing together the Riemann Singularity Theorem for the generic fiber imply that $\operatorname{ord}_{x}(\Theta)=h^{0}(X,I)$.   We expect, however, that there are examples of $X$ and $I$ for which no such family can be found.
\end{remark}

\bibliography{bibl}

\providecommand{\bysame}{\leavevmode\hbox to3em{\hrulefill}\thinspace}
\providecommand{\MR}{\relax\ifhmode\unskip\space\fi MR }
\providecommand{\MRhref}[2]{%
  \href{http://www.ams.org/mathscinet-getitem?mr=#1}{#2}
}
\providecommand{\href}[2]{#2}
\begin{thebibliography}{10}

\bibitem{alexeev02}
Valery Alexeev, \emph{Complete moduli in the presence of semiabelian group
  action}, Ann. of Math. (2) \textbf{155} (2002), no.~3, 611--708.

\bibitem{alexeev04}
\bysame, \emph{Compactified {J}acobians and {T}orelli map}, Publ. Res. Inst.
  Math. Sci. \textbf{40} (2004), no.~4, 1241--1265.

\bibitem{Irred1977}
A.~B. Altman, A.~Iarrobino, and S.~L. Kleiman, \emph{Irreducibility of the
  compactified {J}acobian}, pp.~1--12, Sijthoff and Noordhoff, Alphen aan den
  Rijn, 1977.

\bibitem{altman80}
A.~B. Altman and S.~L. Kleiman, \emph{Compactifying the {P}icard scheme}, Adv.
  in Math. \textbf{35} (1980), no.~1, 50--112.

\bibitem{altman90}
\bysame, \emph{The presentation functor and the compactified {J}acobian}, The
  Grothen-dieck Festschrift, Vol.\ I, Progr. Math., vol.~86, Birkh\"auser
  Boston, Boston, MA, 1990, pp.~15--32.

\bibitem{ACGH}
E.~Arbarello, M.~Cornalba, P.~A. Griffiths, and J.~Harris, \emph{Geometry of
  algebraic curves. {V}ol. {I}}, Grundlehren der Mathematischen Wissenschaften
  [Fundamental Principles of Mathematical Sciences], vol. 267, Springer-Verlag,
  New York, 1985.

\bibitem{beauville77}
Arnaud Beauville, \emph{Prym varieties and the {S}chottky problem}, Invent.
  Math. \textbf{41} (1977), no.~2, 149--196.

\bibitem{bp}
U.~N. Bhosle and A.~J. Parameswaran, \emph{On the {P}oincar\'e formula and the
  {R}iemann singularity theorem over nodal curves}, Math. Ann. \textbf{342}
  (2008), no.~4, 885--902.

\bibitem{briancon}
J.~Brian{\c{c}}on, M.~Granger, and J.-P. Speder, \emph{Sur le sch\'ema de
  {H}ilbert d'une courbe plane}, Ann. Sci. \'Ecole Norm. Sup. (4) \textbf{14}
  (1981), no.~1, 1--25.

\bibitem{caporaso08}
Lucia Caporaso, \emph{N\'eron models and compactified {P}icard schemes over the
  moduli stack of stable curves}, Amer. J. Math. \textbf{130} (2008), no.~1,
  1--47.

\bibitem{caporaso07}
\bysame, \emph{Geometry of the theta divisor of a compactified {J}acobian}, J.
  Eur. Math. Soc. \textbf{11} (2009), 1385--1427.

\bibitem{yano}
Sebastian Casalaina-Martin and Robert Friedman, \emph{Cubic threefolds and
  abelian varieties of dimension five}, J. Algebraic Geom. \textbf{14} (2005),
  no.~2, 295--326.

\bibitem{dSouza}
Cyril D'Souza, \emph{Compactification of generalised {J}acobians}, Proc. Indian
  Acad. Sci. Sect. A Math. Sci. \textbf{88} (1979), no.~5, 419--457.

\bibitem{esteves97}
Eduardo Esteves, \emph{Very ampleness for theta on the compactified
  {J}acobian}, Math. Z. \textbf{226} (1997), no.~2, 181--191.

\bibitem{friedman}
Robert Friedman and John~W. Morgan, \emph{Smooth four-manifolds and complex
  surfaces}, Ergebnisse der Mathematik und ihrer Grenzgebiete (3) [Results in
  Mathematics and Related Areas (3)], vol.~27, Springer-Verlag, Berlin, 1994.

\bibitem{fulton}
William Fulton, \emph{Intersection theory}, second ed., Ergebnisse der
  Mathematik und ihrer Grenzgebiete. 3. Folge. A Series of Modern Surveys in
  Mathematics [Results in Mathematics and Related Areas. 3rd Series. A Series
  of Modern Surveys in Mathematics], vol.~2, Springer-Verlag, Berlin, 1998.

\bibitem{kass08}
Jesse Kass, \emph{Completions of {N}\'{e}ron models}, Ph.D. thesis, Harvard
  University, Cambridge, Massachusetts, 2009, available from
  http://www.proquest.com (publication number AAT 3365303).

\bibitem{kempf70}
George Kempf, \emph{The singularities of certain varieties in the {J}acobian of
  a curve}, Ph.D. thesis, Columbia University, New York, New York, 1970.

\bibitem{kempf73}
\bysame, \emph{On the geometry of a theorem of {R}iemann}, Ann. of Math. (2)
  \textbf{98} (1973), 178--185.

\bibitem{kleiman09}
Steven~Lawrence Kleiman and Renato~Vidal Martins, \emph{The canonical model of
  a singular curve}, Geom. Dedicata \textbf{139} (2009), 139--166.

\bibitem{kleppe81}
Hans Kleppe, \emph{The {P}icard scheme of a curve and its compactification},
  Ph.D. thesis, Massachusetts Institute of Technology, Cambridge,
  Massachusetts, 1981.

\bibitem{kleppe81a}
Hans Kleppe and Steven~L. Kleiman, \emph{Reducibility of the compactified
  {J}acobian}, Compositio Math. \textbf{43} (1981), no.~2, 277--280.

\bibitem{matsumura}
Hideyuki Matsumura, \emph{Commutative ring theory}, second ed., Cambridge
  Studies in Advanced Mathematics, vol.~8, Cambridge University Press,
  Cambridge, 1989, Translated from the Japanese by M. Reid.

\bibitem{melo07}
Margarida Melo, \emph{Compactified {P}icard stacks over $\overline {M}\sb g$},
  Math. Z. \textbf{263} (2009), no.~4, 939--957.

\bibitem{mk}
D.~Mumford, \emph{On the {K}odaira dimension of the {S}iegel modular variety},
  Algebraic geometry---open problems ({R}avello, 1982), Lecture Notes in Math.,
  vol. 997, Springer, Berlin, 1983, pp.~348--375.

\bibitem{nagata}
M.~Nagata, \emph{The theory of multiplicity in general local rings},
  Proceedings of the international symposium on algebraic number theory,
  {T}okyo \& {N}ikko, 1955 (Tokyo), Science Council of Japan, 1956,
  pp.~191--226.

\bibitem{pandharipande96}
Rahul Pandharipande, \emph{A compactification over {$\overline {M}\sb g$} of
  the universal moduli space of slope-semistable vector bundles}, J. Amer.
  Math. Soc. \textbf{9} (1996), no.~2, 425--471.

\bibitem{smith2001}
Roy Smith and Robert Varley, \emph{A {R}iemann singularities theorem for {P}rym
  theta divisors, with applications}, Pacific J. Math. \textbf{201} (2001),
  no.~2, 479--509.

\bibitem{soucaris}
A.~Soucaris, \emph{The ampleness of the theta divisor on the compactified
  {J}acobian of a proper and integral curve}, Compositio Math. \textbf{93}
  (1994), no.~3, 231--242.

\end{thebibliography}
\end{document}